\documentclass[11pt]{article}
\def\thetitle{A very robust Ramsey theorem for matchings}

\usepackage{graphicx}

\usepackage{amsmath,amssymb}
\usepackage{nicefrac}

\usepackage[usenames,dvipsnames,svgnames,table]{xcolor}
\definecolor{CombinatoricaAqua}{HTML}{00698C}
\definecolor{CombinatoricaBlue}{HTML}{3A3293}
\definecolor{CombinatoricaBrown}{HTML}{66220C}
\definecolor{CombinatoricaRed}{HTML}{DF2A27}
\definecolor{HarvardCrimson}{rgb}{0.6471, 0.1098, 0.1882}
\definecolor{DAGreen}{HTML}{339900}

\makeatletter
\let\reftagform@=\tagform@
\def\tagform@#1{\maketag@@@
	{(\ignorespaces\textcolor{CombinatoricaBrown}{#1}\unskip\@@italiccorr)}}
\renewcommand{\eqref}[1]{\textup{\reftagform@{\ref{#1}}}}
\makeatother

\usepackage[backref=page]{hyperref}
\hypersetup{	unicode,
	pdfencoding=auto,
	pdfauthor={Peleg Michaeli},
	pdftitle={\thetitle},
	pdfsubject={},
	pdfkeywords={},
	colorlinks=true,
	citecolor=CombinatoricaBlue,
	linkcolor=CombinatoricaAqua,
	anchorcolor=CombinatoricaBrown,
	urlcolor=HarvardCrimson}

\usepackage{amsthm}
\usepackage{thmtools}
\usepackage{bbm}
\usepackage{enumitem}
\usepackage[capitalize]{cleveref}
\Crefname{mainthm}{Theorem}{Theorems}
\Crefname{fact}{Fact}{Facts}
\Crefname{claim}{Claim}{Claims}
\Crefname{observation}{Observation}{Observations}
\Crefname{definition}{Definition}{Definitions}


\declaretheoremstyle[
spaceabove=\topsep, spacebelow=\topsep,
headfont=\color{CombinatoricaBrown}\normalfont\bfseries,
bodyfont=\itshape,
]{thm}
\declaretheoremstyle[
spaceabove=\topsep, spacebelow=\topsep,
headfont=\color{CombinatoricaBrown}\normalfont\bfseries,
bodyfont=\normalfont,
]{dfn}
\declaretheoremstyle[
spaceabove=0.5\topsep, spacebelow=0.5\topsep,
headfont=\color{CombinatoricaBrown}\normalfont\bfseries,
bodyfont=\normalfont,
]{rmk}

\declaretheorem[style=thm,name=Theorem]{mainthm}
\declaretheorem[style=thm,parent=section]{theorem}
\declaretheorem[style=thm,sibling=theorem]{lemma}
\declaretheorem[style=thm,sibling=theorem]{corollary}

\declaretheorem[style=thm,sibling=theorem]{proposition}

\declaretheorem[style=thm,sibling=mainthm]{conjecture}
\declaretheorem[style=thm,sibling=mainthm]{question}

\usepackage[nobysame,msc-links,non-sorted-cites]{amsrefs}

\renewcommand{\eprint}[1]{\href{https://arxiv.org/abs/#1}{arXiv:#1}}

\BibSpec{book}{	+{}  {\PrintPrimary}                {transition}
	+{,} { \textbf}                     {title}	+{.} { }                            {part}
	+{:} { \textit}                     {subtitle}
	+{,} { \PrintEdition}               {edition}
	+{}  { \PrintEditorsB}              {editor}
	+{,} { \PrintTranslatorsC}          {translator}
	+{,} { \PrintContributions}         {contribution}
	+{,} { }                            {series}
	+{,} { \voltext}                    {volume}
	+{,} { }                            {publisher}
	+{,} { }                            {organization}
	+{,} { }                            {address}
	+{,} { \PrintDateB}                 {date}
	+{,} { }                            {status}
	+{}  { \parenthesize}               {language}
	+{}  { \PrintTranslation}           {translation}
	+{;} { \PrintReprint}               {reprint}
	+{.} { }                            {note}
	+{.} {}                             {transition}
	+{}  {\SentenceSpace \PrintReviews} {review}
}
\BibSpec{incollection}{  +{}  {\PrintAuthors}                {author}
  +{,} { \textit}                     {title}
  +{.} { }                            {part}
  +{:} { \textit}                     {subtitle}
  +{,} { \PrintContributions}         {contribution}
  +{,} { \PrintConference}            {conference}
  +{}  {\PrintBook}                   {book}
  +{,} { }                            {booktitle}
	+{}  { \PrintEditorsB}              {editor}
	+{,} { }                            {publisher}
  +{,} { \PrintDateB}                 {date}
  +{,} { pp.~}                        {pages}
  +{,} { }                            {status}
  +{,} { \PrintDOI}                   {doi}
  +{,} { available at \eprint}        {eprint}
  +{}  { \parenthesize}               {language}
  +{}  { \PrintTranslation}           {translation}
  +{;} { \PrintReprint}               {reprint}
  +{.} { }                            {note}
  +{.} {}                             {transition}
  +{}  {\SentenceSpace \PrintReviews} {review}
}

\makeatletter
\renewcommand{\PrintNames@a}[4]{	\PrintSeries{\name}
	{#1}
	{}{ and \set@othername}
	{,}{ \set@othername}
	{}{ and \set@othername}
	{#2}{#4}{#3}}
\makeatother

\makeatletter
\def\mathcolor#1#{\@mathcolor{#1}}
\def\@mathcolor#1#2#3{	\protect\leavevmode
	\begingroup
	\color#1{#2}#3	\endgroup
}
\makeatother
\definecolor{Red}{rgb}{0.618,0,0}
\definecolor{Blue}{rgb}{0,0,1}
\definecolor{Green}{rgb}{0,0.298,0}

\usepackage{sectsty}
\sectionfont{\color{CombinatoricaBrown}}
\subsectionfont{\color{CombinatoricaBrown}}
\subsubsectionfont{\color{CombinatoricaBrown}}

\usepackage{algorithm}
\usepackage[noend]{algpseudocode}

\usepackage{soul}
\soulregister\ref7

\usepackage{pifont}
\usepackage{calc}

\usepackage[a4paper]{geometry}
\title{\thetitle}
\author{
  Peter Keevash\thanks{
    Mathematical Institute,
    University of Oxford,
    Oxford, UK.
    Email: \href{mailto:keevash@maths.ox.ac.uk}
                {\tt keevash@maths.ox.ac.uk}.
    Supported by ERC Advanced Grant 883810.
  }
  \and
  Peleg Michaeli\thanks{
    Mathematical Institute,
    University of Oxford,
    Oxford, UK.
    Email: \href{mailto:michaeli@maths.ox.ac.uk}
                {\tt michaeli@maths.ox.ac.uk}.
    Research supported by UKRI under the Horizon Europe Guarantee for MSCA Postdoctoral Fellowships (EP/Z001781/1, IRIS).
  }
}

\makeatletter
\def\namedlabel#1#2{\begingroup
  #2  \def\@currentlabel{#2}  \phantomsection\label{#1}\endgroup
}
\makeatother

\usepackage{mleftright}
\mleftright
\newcommand{\defn}[1]{{\bfseries #1}}

\newcommand{\eps}{\varepsilon}
\renewcommand{\phi}{\varphi}
\newcommand{\NN}{\mathbb{N}}
\newcommand{\ZZ}{\mathbb{Z}}

\newcommand{\cA}{\mathcal{A}}

\newcommand{\cC}{\mathcal{C}}

\newcommand{\cG}{\mathcal{G}}
\newcommand{\cH}{\mathcal{H}}

\newcommand{\cK}{\mathcal{K}}

\newcommand{\cP}{\mathcal{P}}

\newcommand{\sm}{\smallsetminus}
\newcommand{\es}{\varnothing}

\newcommand{\floor}[1]{\left\lfloor{#1}\right\rfloor}
\newcommand{\ceil}[1]{\left\lceil{#1}\right\rceil}

\DeclareMathOperator{\rank}{rank}

\newcommand{\vect}{\mathbf}
\newcommand{\e}{\vect{e}}
\newcommand{\w}{\vect{w}}

\newcommand{\whp}[0]{\textbf{whp}}

\DeclareMathOperator{\unclr}{uncolour}
\newcommand{\vt}{\mathbf{t}}
\newcommand{\vz}{\mathbf{z}}
\newcommand{\tmax}{\|\vt\|_\infty}
\newcommand{\one}{\mathbf{1}}
\newcommand{\acl}[0]{\textbf{aCL}}
\newcommand{\wcl}[0]{\textbf{wCL}}

\usepackage{tabto}

\usepackage{tcolorbox}
\usepackage{comment}

\usepackage{subcaption}
\usepackage{tikz,pgfplots}
\pgfplotsset{compat=1.16}
\usetikzlibrary{calc}
\usetikzlibrary{fit}

\newcommand{\GE}{\mathsf{GE}}

\newcommand{\mns}{\nu_\Sigma}
\begin{document}

\maketitle

\begin{abstract}
Our main result is a robust generalisation of the Cockayne--Lorimer theorem
on the multicolour Ramsey number of matchings. It is moreover a generalisation
of the transference generalisation of Cockayne--Lorimer, which (informally)
says that the random graph $G \sim G(n,p)$ with $np \to \infty$
\whp{}\footnote{With high probability, that is, with probability tending to $1$ as $n\to\infty$.}
has essentially the same Ramsey matching properties as the complete graph $K_n$.
We show, somewhat surprisingly, that the same is true under the rather weak
robustness assumption that $G$ is an $s$-connector
(i.e.~$\overline{G}$ is $K_{s,s}$--free) with $s=o(n)$.
Moreover, we show that such $G$ has only an additive $O(s)$ loss
with respect to $K_n$ for monochromatic matchings, which is essentially sharp.
Our proof adapts a compression algorithm
based on Gallai--Edmonds decompositions
that we developed previously for generalised Ramsey--Tur\'an problems.
\end{abstract}

\section{Introduction}
An important recent theme in Combinatorics has been to seek robust versions
of classical extremal results, see the surveys~\cites{conlon2014combinatorial,bottcher2017large}.
Such a result often takes the form of a transference principle,
showing that a random graph $G(n,p)$ for suitable $p$ approximately inherits
a certain property from the complete graph $K_n$.
We will be concerned with the multicolour Ramsey numbers of matchings (defined below),
which are determined by the Cockayne--Lorimer theorem~\cite{CL75}.
A transference version obtained in~\cite{GKM22} shows that passing from $K_n$
to a sparse binomial random graph $G \sim G(n,p)$ with $np \to \infty$
\whp{}~has essentially no effect on this Ramsey property.
We will show that the same holds under a much weaker robustness assumption on $G$.

To state our result more precisely, we first need some notation.
Given graphs $H_1,\dots,H_q$, we write $G\to (H_1,\dots,H_q)$
if every $q$-edge-colouring of $G$ has a $j$-coloured copy of $H_j$ for some $j\in[q]:=\{1,\dots,q\}$.
Write $\NN$ for the set of non-negative integers
and $\NN_+ = \NN \sm \{0\}$.
For $\vt=(t_1,\dots,t_q)\in\NN_+^q$,
we write $\vt K_2 = (t_1K_2,\dots,t_q K_2)$,
where $tK_2$ is a matching of size $t$.
Say that a graph $G=(V,E)$ is an \defn{$s$-connector}\footnote{This property has several other names in the literature.
For instance:
an $s$-connector is often said to be \defn{$s$-joined}~\cite{Mon19};
$G$ is an $s$-connector if and only if $\alpha^*(G)\le s-1$,
where $\alpha^*(G)$ is the \defn{bipartite independence number} of $G$
(see~\cite{HHPWWY24}),
namely, the maximum integer $t$ such that $G$ contains a $(t,t)$-bipartite hole,
or, equivalently, that $\overline{G}$ contains $K_{t,t}$.
Finally,
an $n$-vertex graph which is a $\beta n$-connector
is often called a \defn{$\beta$-graph} (see, e.g.,~\cite{FK21}).
Here, we chose the terminology from~\cite{KLM25}.}
if $E_G(X,Y)\ne\es$ for every disjoint $X,Y\subseteq V$ with $|X|,|Y|\ge s$.
Note that $G(n,p)$ with $np \to \infty$ \whp{}~is a \defn{sublinear connector},
that is, an $s$-connector for $s=o(n)$.
Write $\one_q=(1,\dots,1)\in\NN^q$
and $\Lambda_\vt=\|\vt-\one_q\|_1 = \sum_{i=1}^q (t_i-1)$ for $\vt\in\NN^q$.
\begin{theorem}\label{thm:main}
  Let $q,s\ge 1$ be integers.
  For all $\vt\in\NN_+^q$ and $n\ge\tmax+\Lambda_\vt+1+7(q+1)(s-1)$,
  if $G$ is an $n$-vertex $s$-connector
  then $G\to \vt K_2$.
\end{theorem}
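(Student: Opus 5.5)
We argue by contradiction. Fix a $q$-colouring of an $n$-vertex $s$-connector $G$ with no $j$-coloured matching of size $t_j$ for any $j$, and write $G_j$ for the colour-$j$ spanning subgraph, so that $\nu(G_j)\le t_j-1$. The plan is to use the Gallai--Edmonds decomposition of each $G_j$, and then to compress the colouring (as in our earlier generalised Ramsey--Tur\'an work) into a canonical, highly structured form while keeping track of the connector property up to $O(s)$ losses.

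By Tutte--Berge, $\nu(G_j)=\tfrac12\bigl(n-\max_{S}(\mathrm{odd}(G_j-S)-|S|)\bigr)$. The Gallai--Edmonds decomposition $V=D_j\cup A_j\cup C_j$ attains this maximum with $S=A_j$. Then $G_j-A_j$ has components consisting of the odd (factor-critical) components of $D_j$ and the even components of $C_j$. Moreover,
\[
t_j-1\ \ge\ \nu(G_j)\ =\ |A_j|+\tfrac12|C_j|+\sum_{K}\tfrac{|K|-1}{2},
\]
where the sum runs over the components $K$ of $G_j[D_j]$. The key feature is that colour $j$ has no edges between distinct components of $G_j-A_j$. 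So, apart from the vertices of $A_j$, colour $j$ only lives inside "small" pieces whose total excess $\sum(|K|-1)$ is bounded by roughly $2(t_j-1-|A_j|)$.

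The compression step replaces each $G_j$ by a canonical extremal structure with the same or smaller matching number. This structure is either a star-like part (a set $A_j$ joined to everything) or a clique part (one odd clique of size $2(t_j-1-|A_j|)+1$ together with singletons). The reduction is the one used in the Cockayne--Lorimer extremal analysis, with the sublinear connector property replacing completeness. Concretely, consider the common refinement of the partitions $\{$components of $G_j-A_j\}$ over all $j$, after deleting $A:=\bigcup_j A_j$, which has size at most $\sum_j|A_j|$. Every edge of $G-A$ must receive some colour $j$, so it lies inside a component of $G_j-A_j$ for that $j$. Hence if we group the vertices of $V\setminus A$ by, for each $j$, the component of $G_j-A_j$ containing them, then any two groups whose vertices are "far apart" in every colour span no edges of $G$.

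The connector property says that there are no two disjoint sets of size $\ge s$ with no edges between them. For each colour this forces all but fewer than $s$ vertices of $V\setminus A$ into a giant collection that is joined in every colour. Iterating over the $q$ colours loses $O(s)$ vertices per colour, which is where $7(q+1)(s-1)$ comes from. I would organise this as an algorithm: repeatedly find a colour $j$ whose non-singleton $D_j$-components and $C_j$ cover a large set $X$. Then either there is a bipartite hole of size $s$ (contradiction), or some component is huge.

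A huge component $K$ of $G_j[D_j]$ contributes $(|K|-1)/2\le t_j-1$ to the count, so $|K|\le 2t_j-1$. Combining the bounds: after removing $A$ and at most $O(qs)$ exceptional vertices, the remaining vertices must lie in a single component for some colour $j$. This gives $n-|A|-O(qs)\le 2(t_j-1-|A_j|)+1$. Summing the star contributions, $|A|\le\sum_i|A_i|$, and then using $|A_j|\le t_j-1$ and $\sum_{i\ne j}|A_i|\le\sum_{i\ne j}(t_i-1)$, the inequality becomes $n\le \tmax+\Lambda_\vt+O(qs)$. This is exactly the Cockayne--Lorimer bound $\max_j t_j+\sum(t_i-1)$ plus the error term, and it contradicts the hypothesis on $n$.

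The main obstacle is the middle step: making the "no edges between components in any colour implies a giant joint component" argument precise with only additive $O(s)$ loss per colour, rather than a multiplicative one. I would first try a greedy splitting: sort the components of $G_j-A_j$ by size and split $V\setminus A$ into two sides each of size at least $s$ with no $j$-edges across. Then induct on colours, restricting to one side where a bipartite hole is needed simultaneously for all colours. Controlling the sizes so that the final hole has both sides at least $s$ costs the constant $7$. This is where the careful compression bookkeeping from our earlier paper would be adapted.
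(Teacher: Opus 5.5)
\paragraph{The gap.} Your middle step fails. You claim that after deleting $A=\bigcup_j A_j$ and $O(qs)$ further vertices, everything left lies in a single component of $G_j-A_j$ for one colour $j$. You justify this only from the connector property and the fact that every edge of $G-A$ is coloured. For $q\ge 3$ that implication is false, already for $s=1$.

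Split $V(K_n)$ into four equal parts $V_1,\dots,V_4$ with $n/2$ even. Give colour $1$ to all edges inside parts and to the pairs $V_1V_2$, $V_3V_4$; colour $2$ to $V_1V_3$, $V_2V_4$; colour $3$ to $V_1V_4$, $V_2V_3$. Every colour class has a perfect matching, so all $A_j=\emptyset$, and there is no bipartite hole. Yet every monochromatic component has only $n/2$ vertices.

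This is not a counterexample to the theorem, because the colour classes have huge matchings, and that is the point. No ``greedy splitting plus induction on colours'' argument can produce a hole or a dominant component from the connector property alone. The multiplicative loss you anticipate is a sign that the intermediate statement is false, not a bookkeeping issue. The dominant component has to be paid for out of the matching budget, and your sketch never spends it. Your final count uses only $|A_i|\le t_i-1$ and the size of one component, leaving the $D$-component contributions $\sum_K(|K|-1)/2$ of the other colours unused.

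Separately, replacing each $G_j$ independently by a canonical star-plus-clique structure is not a legitimate compression. It destroys the property that every edge of $G$ is coloured, which is exactly what you need to invoke the connector property afterwards.

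\paragraph{The missing idea.} The paper uses a decycling step.
\begin{enumerate}
\item It takes a $\sigma$-maximal set $T$, where $\sigma(T)=r(T)-|T|$. Here $r(T)$ is the drop in $\sum_j\nu(G_j)$ caused by deleting $T$: it equals $\sum_j|A_j\cap T|$ plus the drop of $\nu$ inside each component of $G_j[D_j]$.
\item Maximality forces $T\supseteq\bigcup_jA_j$ and makes all surviving pieces factor-critical.
\item It also makes the hypergraph of surviving monochromatic components a hyperforest. Otherwise the vertices $u_1,\dots,u_\ell$ of a cycle could be added to $T$ without decreasing $\sigma$.
\item Uncolouring the edges at $T$ costs at least $|T|$ in $\sum_j\nu(G_j)$.
\end{enumerate}
Only for such acyclic, fully-coloured colourings of an $s$-connector does one get a component containing all but $13(s-1)$ vertices, via a weighted centroid of the incidence forest. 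This is where the connector property is really used.

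Before decycling, the paper also controls the sets $C_j$, which you ignore.
\begin{enumerate}
\item It adds host edges between $C_j$ and $D_j$, which does not change $\nu(G_j)$, until none remain. By the connector property this forces $|C_j|<s$ or $|C_j|>n-2s$, and the latter is excluded by $\nu(G_j)\le n/2-s$.
\item It then deletes the edges at $C_j$ and discards $C^*=\bigcup_jC_j$, at a cost of at most $q(s-1)$ vertices.
\end{enumerate}
As a result, on $V\setminus(T\cup C^*)$ the colouring still covers every edge of $G$.

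With $T$ in place of your $A$, your closing accounting is essentially the paper's: $\sum_j\nu(G_j)\ge\kappa+|T|$ with $2\kappa+1\ge n-|T|-(q+13)(s-1)$.
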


Note that a $1$-connector is just a complete graph, so the $s=1$ case of
\Cref{thm:main} is precisely the Cockayne--Lorimer Theorem
(which we do not assume: our general argument
specialises to a proof of this case).
For each $q\ge 1$ and $s\ge 2$,
the dependence on $s$ in \cref{thm:main}
is optimal up to a multiplicative constant,
as shown by the following simple proposition
(the construction is to add $s-1$ isolated vertices
to an extremal Cockayne--Lorimer colouring).

\begin{proposition}\label{prop:sharp}
  For all integers $q,s\ge 1$
  and every $\vt\in\NN_+^q$
  there exists
  an $n$-vertex $s$-connector $G$
  with $n=\tmax+\Lambda_\vt+s-1$
  and $G\not\to\vt K_2$.
\end{proposition}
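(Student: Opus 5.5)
We build $G$ explicitly: take the extremal Cockayne--Lorimer colouring of a complete graph and add $s-1$ isolated vertices. Set $m=\tmax+\Lambda_\vt$ and let $G$ be the disjoint union of $K_m$ and $s-1$ isolated vertices, so $n=m+s-1$. Two things need checking.

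\emph{$G$ is an $s$-connector.} Let $X,Y\subseteq V(G)$ be disjoint with $|X|,|Y|\ge s$. At most $s-1$ vertices of $G$ lie outside $K_m$, so each of $X$ and $Y$ contains a vertex of $K_m$. These two vertices are distinct, and they are adjacent in $K_m$. Hence $E_G(X,Y)\ne\es$.

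\emph{$G\not\to\vt K_2$.} Since the isolated vertices carry no edges, it suffices to $q$-colour $E(K_m)$ with no $j$-coloured $t_jK_2$ for any $j\in[q]$. We use the standard Cockayne--Lorimer construction; we do not quote the theorem, only the colouring.
\begin{itemize}
\item Assume without loss of generality that $t_1=\tmax$.
\item Partition $V(K_m)$ into sets $A_1,\dots,A_q$ with $|A_1|=2t_1-1$ and $|A_j|=t_j-1$ for $j\ge2$. The sizes add up correctly: $(2t_1-1)+\sum_{j\ge2}(t_j-1)=t_1+\sum_{j}(t_j-1)=\tmax+\Lambda_\vt=m$.
\item Colour each edge $uv$ with colour $\max\{j: \{u,v\}\cap A_j\ne\es\}$. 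Equivalently, colour $j\ge2$ is used on the edges meeting $A_j$ but no $A_i$ with $i>j$, and colour $1$ is used on the edges inside $A_1$.
\end{itemize}

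Now bound each colour class.
\begin{itemize}
\item Colour $1$ lives inside $A_1$, which has $2t_1-1$ vertices. Any matching there has at most $t_1-1$ edges.
\item For $j\ge2$, every edge of colour $j$ has an endpoint in $A_j$. So a colour-$j$ matching has at most $|A_j|=t_j-1$ edges.
\end{itemize}
Thus no colour $j$ contains $t_jK_2$, and $G\not\to\vt K_2$.

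The degenerate case $t_j=1$ causes no trouble: then $A_j$ is empty and colour $j$ is unused. If $q=1$, the colouring is just $K_{2t_1-1}$ in a single colour, and the argument still holds. The construction is routine, so there is no real obstacle; the only step needing care is the vertex count matching $n=\tmax+\Lambda_\vt+s-1$, which was checked above.
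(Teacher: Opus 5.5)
Your proof is correct and is exactly the paper's construction: add $s-1$ isolated vertices to the extremal Cockayne--Lorimer colouring of $K_{\tmax+\Lambda_\vt}$. The paper only states this in a parenthetical remark; you supply the routine details, namely the vertex count, the $s$-connector check, and the bounds on each colour class, and all of them are right.
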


\Cref{thm:main} generalises the transference
version of Cockayne--Lorimer obtained in~\cite{GKM22}.
Furthermore, our proof is much simpler (we do not use the sparse regularity lemma)
and applies in much greater generality due to the weaker assumption on $G$,
which is satisfied e.g.~in random regular graphs and sparse regular expanders.

\subsection{Related work}

Another fruitful direction for generalising Ramsey results (somewhat orthogonal to that taken in this paper)
is given by Ramsey--Tur\'an theory, developed by Erd\H{o}s and S\'os in the 1960s and studied
in many papers of Erd\H{o}s and collaborators, surveyed in~\cites{SS01,BL13,BHS15}.
For matchings, this {\em Ramsey--Tur\'an} variant asks for the maximum number of edges
in a graph for which there exists a colouring with no $j$-coloured matching of a given size $t_j$;
a further generalisation asks to maximise the number of copies of a certain graph,
e.g., a clique; see~\cites{OR24,KM25+}.

The classical Ramsey--Tur\'an problem
asks how dense an $n$-vertex graph $G$ can be
subject to $\alpha(G)<s$ and $G\not\to (H_1,\dots,H_q)$.
However, for matchings it seems that bounding the independence number
is not the right way to capture the relevant obstruction:
if $G$ is the disjoint union of two cliques of order $n/2$,
then $\alpha(G)=2$
but there is an adversarial $2$-colouring
in which every monochromatic matching has size at most $n/4$,
which is what the trivial pigeonhole bound guarantees.
The existence of a large bipartite hole in this example
suggests that our assumption that $G$ is an $s$-connector
is more relevant in this context.

An earlier paper~\cite{GKM22b} on Ramsey properties of connectors shows that
$s$-connectors inherit the {\em forests} Ramsey property of the complete graph,
up to an additive loss (depending on $s$)
in the size of the largest monochromatic forest.
Further recent results on $s$-connectors inclued
almost-directed almost-spanning paths in adversarial orientations of $\beta n$-connectors~\cite{GKM23}
and perfect tree-tilings in $\beta n$-connectors with linear minimum degree~\cite{HMWY24} (see also~\cite{NP20}).

\paragraph{Paper organisation.}
In \cref{sec:comp:1} we develop single-colour compressions
and the Gallai--Edmonds toolkit.
In \cref{sec:comp:q} we extend to the multicolour setting
and complete the proof of \cref{thm:main}.
\Cref{sec:discuss} discusses some applications
and potential future research directions.

\section{Single colour compressions}\label{sec:comp:1}

This section introduces some compression algorithms that will be
applied to each colour separately; the graph $G$ in this section
should be thought of as the edges of some fixed colour
within $G$ as in \Cref{thm:main}.

We start by defining the \defn{Gallai--Edmonds decomposition}
(for short, \defn{GE-decomposition}) $\GE(G)=(C,A,D)$
of a graph $G=(V,E)$. We call a vertex \defn{essential} in $G$
if it is covered by every maximum matching of $G$,
or otherwise \defn{inessential}.
We let $D\subseteq V$ be the set of inessential vertices,
let $A$ be the set of vertices in $V\sm D$
adjacent to at least one vertex of $D$,
and let $C=V\sm (D\cup A)$.

The utility of the GE-decomposition is demonstrated by the
following Gallai--Edmonds Structure theorem;
see~\cite{LP}*{Theorem~3.2.2}.
For $U\subseteq V$, we write
$k(U)=k_G(U)$ for the number of connected components
in the induced subgraph $G[U]$ on $U$.
We say that a matching of a graph $H$ is \defn{near-perfect}
if it leaves exactly one vertex uncovered (so $|V(H)|$ is odd).
We say that $H$ is \defn{factor-critical} if $H \sm v$
has a perfect matching for every $v \in V(H)$.
Write $\nu(G)$ for the matching number of $G$.
\begin{theorem}[Gallai--Edmonds Structure Theorem]
  \label{thm:GE}
  Let $G=(V,E)$ be a graph with $\GE(G)=(C,A,D)$.
  Then each component of $D$ is factor-critical.
  Also, any maximum matching of $G$ contains a perfect matching of $C$
  and a near-perfect matching of each component of $D$,
  and matches all vertices of $A$ with vertices in distinct components of $D$.
  In particular, $|V|-2\nu(G)=k(D)-|A|$ vertices are uncovered.
\end{theorem}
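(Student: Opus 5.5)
The plan is to derive \cref{thm:GE} from two standard ingredients: \emph{Gallai's lemma} and a \emph{stability lemma} for the decomposition, and then finish by comparing with the Tutte--Berge bound. Gallai's lemma says that if $H$ is connected and every vertex of $H$ is inessential, then $H$ is factor-critical. The stability lemma says that for $a\in A$ we have $\nu(G-a)=\nu(G)-1$ and $\GE(G-a)=(C,A\sm\{a\},D)$.

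\textbf{Gallai's lemma.} I would prove it by contradiction. Suppose some maximum matching $M$ of $H$ leaves two vertices $u\ne v$ uncovered, and choose $M,u,v$ so that $\mathrm{dist}(u,v)$ is minimal. The distance is at least $2$, since otherwise $M+uv$ would be a larger matching. Take an interior vertex $w$ of a shortest $u$--$v$ path. Then $w$ is covered by $M$, but by assumption some maximum matching $M'$ misses $w$. Look at $M\triangle M'$. The $M$-alternating component starting at $w$ has even length and ends at a vertex missed by $M$. Swapping along it gives a maximum matching that misses $w$ together with one of $u,v$, at smaller distance. This contradicts minimality. Hence every maximum matching misses exactly one vertex, and since every vertex is missed by some maximum matching, $H$ is factor-critical.

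\textbf{Stability lemma.} This is the step I expect to be the main obstacle. I would use the characterisation: $x\in D$ iff, for a fixed maximum matching $M$, there is an even-length $M$-alternating path from an $M$-uncovered vertex to $x$. This follows from the symmetric-difference argument above.

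Fix $a\in A$ with a neighbour $d\in D$. Take a maximum matching missing $d$; it must cover $a$, otherwise we could add $ad$. Deleting its edge at $a$ shows $\nu(G-a)\ge \nu(G)-1$. The reverse inequality is trivial, and it is strict because $a$ is essential. So $\nu(G-a)=\nu(G)-1$.

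Next I would show that a vertex $x\neq a$ is inessential in $G-a$ iff it is inessential in $G$. For the forward direction, take a maximum matching of $G-a$ missing $x$ and re-attach $a$ via alternating paths. For the backward direction, take a maximum matching of $G$ missing $x$ and covering $a$, and delete the edge at $a$. The delicate point is handling $x$ adjacent to $a$; I would do it through the alternating-path characterisation. This gives $D(G-a)=D$. The sets $A$ and $C$ then follow, because $N(D)$ changes only by the removal of $a$.

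\textbf{Induction on $|A|$.} If $A=\es$, there are no edges between $D$ and $C$, so each component of $G[D]$ is a component of $G$.
\begin{itemize}
\item Each vertex of a $D$-component $K$ is inessential in $K$, because a maximum matching of $G$ restricts to a maximum matching of $K$. By Gallai's lemma, $K$ is factor-critical.
\item Each component of $C$ consists only of essential vertices, so a maximum matching restricts to a perfect matching of $C$.
\item Counting uncovered vertices gives $|V|-2\nu(G)=k(D)$.
\end{itemize}
If $A\ne\es$, apply stability repeatedly to delete all of $A$. This yields $\nu(G-A)=\nu(G)-|A|$, and $G-A$ has decomposition $(C,\es,D)$. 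The base case then gives the factor-criticality of $D$-components and $|V\sm A|-2\nu(G-A)=k(D)$, which rearranges to $|V|-2\nu(G)=k(D)-|A|$.

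\textbf{Structure of every maximum matching.} Let $M$ be any maximum matching of $G$. Removing $A$ from $G$ leaves at least $k(D)$ odd components, since each $D$-component is factor-critical and hence odd. The usual Tutte--Berge counting gives that $M$ leaves at least $k(D)-|A|$ vertices uncovered. Equality holds, so this counting is tight. Tightness forces:
\begin{itemize}
\item every vertex of $A$ is matched into $D$;
\item distinct vertices of $A$ go to distinct $D$-components;
\item each $D$-component receives a near-perfect matching;
\item $C$ is perfectly matched internally.
\end{itemize}
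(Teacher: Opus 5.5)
Your proof is correct. It is the standard textbook proof of the Gallai--Edmonds theorem (Gallai's lemma, the stability lemma, deleting $A$ one vertex at a time, then tightness in the Tutte--Berge count), and the paper gives no proof of \cref{thm:GE} of its own, simply citing~\cite{LP}. The one step you leave vague, $D(G-a)\subseteq D(G)$, goes through with the tools you already set up, and its real subtlety is not $x$ adjacent to $a$. Take the maximum matching $M$ of $G$ missing $d$, with $ab\in M$, and use $M-ab$ as reference matching in $G-a$. An even alternating path in $G-a$ from an exposed vertex is then already $M$-alternating unless it starts at $b$. In that case you prepend $d,a$, which is valid because $d$ is exposed and so cannot lie on that path.
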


We also require the following {\em stability lemma} (see~\cite{LP}*{Lemma~3.2.2}).
\begin{lemma}[Stability]\label{lem:stab}
  Let $G=(V,E)$ be a graph with $\GE(G)=(C,A,D)$.
  Let $v \in A$ and $G'=G \sm v = G[V \sm \{v\}]$.
  Then $\GE(G')=(C,A\sm\{v\},D)$.
\end{lemma}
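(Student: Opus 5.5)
We prove the Stability lemma directly from the definition of the GE-decomposition and \Cref{thm:GE}. The whole argument reduces to one identity: $\nu(G')=\nu(G)-1$. Once we have it, we compare essential and inessential vertices in $G$ and in $G'$.

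\textbf{Step 1: $\nu(G')=\nu(G)-1$.} Since $v\in A$, $v$ is essential, so every maximum matching of $G$ covers $v$. If $\nu(G')=\nu(G)$, then a maximum matching of $G'$ would be a maximum matching of $G$ missing $v$, which is impossible. Deleting the edge at $v$ from a maximum matching of $G$ gives $\nu(G')\ge\nu(G)-1$. Hence $\nu(G')=\nu(G)-1$.

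\textbf{Step 2: the inessential vertices of $G'$ are exactly $D$.}

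First let $u\in D$. Pick a maximum matching $M$ of $G$ missing $u$. By \Cref{thm:GE}, $M$ matches $v$ to some vertex. Removing that edge gives a matching of $G'$ of size $\nu(G)-1=\nu(G')$, so it is maximum in $G'$, and it still misses $u$. Thus $u$ is inessential in $G'$.

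Conversely, let $u\in V\sm(D\cup\{v\})$ and suppose a maximum matching $M'$ of $G'$ misses $u$. Since $v\in A$, it has a neighbour $w\in D$. If $w$ is uncovered by $M'$, then $M'+vw$ is a matching of $G$ of size $\nu(G)$ missing $u$. That contradicts $u$ being essential in $G$.

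Otherwise we must re-route, and this is the main obstacle. Take a maximum matching $M_0$ of $G$ that misses $w$; it exists because $w\in D$. Consider the symmetric difference of $M_0\sm\{\text{edge at }v\}$ with $M'$. These two matchings of $G'$ have equal size $\nu(G')$.

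\begin{itemize}
\item In $M'$, the vertex $u$ is uncovered. In $M_0$ it is covered, since $u$ is essential in $G$.
\item So $u$ is the endpoint of an alternating path $P$.
\item Flipping $M'$ along $P$ gives another maximum matching of $G'$. We want one that misses both $u$ and $w$, so that adding $vw$ yields the contradiction.
\end{itemize}

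A cleaner alternative avoids this path-chasing. Using the Tutte--Berge formula one can show that $u$ is inessential in $G'$ if and only if $\nu(G'-u)=\nu(G')$. Since $v\in A$ is essential in $G-u$ as well, we get $\nu(G-u)=\nu(G'-u)+1$, which gives $\nu(G-u)=\nu(G)$. This contradicts $u$ being essential in $G$.

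For this route we only need that $v$ remains essential in $G-u$. To see this, note that deleting $u$ only lowers $\nu$ if $u$ is essential. The Tutte barrier $A$ from \Cref{thm:GE} witnesses
\[
\nu(G-u)=\tfrac12\bigl(|V|-1-(k(D)-|A|)\bigr),
\]
because removing $u\in C\cup A$ leaves $D$'s components intact (for $u\in A$, the barrier becomes $A\sm\{u\}$). Hence $A$ is still a Tutte barrier in $G-u$, and every vertex of it is essential there.

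\textbf{Step 3: conclude.} With $D'=D$ established, the definition gives $A'=$ the vertices of $V\sm(D\cup\{v\})$ adjacent to $D$, which is $A\sm\{v\}$. Then $C'=C$. Therefore $\GE(G')=(C,A\sm\{v\},D)$.
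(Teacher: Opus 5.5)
The paper does not prove this lemma; it cites Lov\'asz--Plummer, so your argument has to stand on its own. Most of it is sound. Step~1, the inclusion $D\subseteq D'$, Step~3, and the reduction of the reverse inclusion to a single claim are all correct. That claim is that $v$ remains essential in $G-u$ for every $u\in(C\cup A)\sm\{v\}$. You can delete the sub-case where $w$ is uncovered and the unfinished path-chasing. Also, ``$u$ is inessential in $G'$ iff $\nu(G'-u)=\nu(G')$'' follows directly from the definition, not from Tutte--Berge.

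The gap is in your proof of that claim, which is the only non-trivial point. Your displayed identity says $\nu(G-u)=\nu(G)-\tfrac12$, which is not even an integer. Since $u$ is essential, the true value is $\nu(G-u)=\nu(G)-1=\tfrac12\bigl(|V|-2-(k(D)-|A|)\bigr)$.

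For $u\in C$ the error is more than bookkeeping. It is true that deleting $u$ leaves the components of $D$ intact. But $|C|$ is even, since $G[C]$ has a perfect matching by \Cref{thm:GE}. So $G[C\sm\{u\}]$ has an odd component, and $(G-u)-A$ has at least $k(D)+1$ odd components. That extra odd component is exactly what makes $A$ a tight barrier in $G-u$, and your justification never accounts for it. For $u\in A$ your count of $k(D)$ is right, but the barrier $A\sm\{u\}$ contributes $|A|-1$, not $|A|$. Finally, you assert rather than show that every vertex of a tight barrier is essential.

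A short repair avoids all three issues. Apply the easy half of Tutte--Berge, $\nu(H)\le\tfrac12\bigl(|V(H)|-\mathrm{odd}(H-S)+|S|\bigr)$, where $\mathrm{odd}(\cdot)$ counts odd components. Use it directly on $H=G-u-v$, with $S=A\sm\{v\}$ if $u\in C$ and $S=A\sm\{u,v\}$ if $u\in A$. There are no $C$--$D$ edges, so $H-S$ is $G[C\sm\{u\}]\cup G[D]$ in the first case and $G[C]\cup G[D]$ in the second. These have at least $k(D)+1$ and $k(D)$ odd components respectively. In both cases
\[
\nu(G-u-v)\le\tfrac12\bigl(|V|-2-(k(D)-|A|)\bigr)-1=\nu(G)-2=\nu(G-u)-1,
\]
so $v$ is essential in $G-u$. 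With this in place, your chain $\nu(G-u)=\nu(G'-u)+1=\nu(G')+1=\nu(G)$ gives the contradiction, and the proof is complete.
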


Fix a host graph $H=(V,F)$ and a (spanning) subgraph $G=(V,E)$.
By definition, if $\GE(G)=(C,A,D)$ then $G$ does not have edges between $C$ and $D$.
However, the host graph $H$ may have such edges.
Our first routine adds $H$-edges between $C$ and $D$,
one by one, until none remain; see \cref{alg:CD:sat}.
We write $F(X,Y)$ for the set of edges in $F$ having one endpoint in $X$ and the other in $Y$.
We note that the algorithm terminates after at most $|F|-|E|$ steps.

\begin{algorithm}
  \caption{$CD$-saturation}\label{alg:CD:sat}
\begin{algorithmic}
  \Procedure{CD-saturate}{$G=(V,E)$; $H=(V,F)$}
    \State {\bf assert} $E\subseteq F$
    \State $(C,A,D)\gets \GE(G)$
    \While{$F(C,D)\ne\es$}
      \State {\bf let } $f \in F(C,D)$
      \State $E\gets E\cup\{f\}$
      \State $(C,A,D)\gets \GE(G)$
    \EndWhile
    \State \Return $G$
  \EndProcedure
\end{algorithmic}
\end{algorithm}

To analyse the outcome of this procedure,
we first note that the addition of an edge between $C$ and $D$
does not change the matching number of the graph.

\begin{lemma}[$CD$-edges]\label{lem:CD:edges}
  Let $G=(V,E)$ be a graph and let $\GE(G)=(C,A,D)$.
  Suppose $C,D\ne\es$
  and obtain $G'$ from $G$ by adding an edge between $C$ and $D$.
  Then,
  $\nu(G')=\nu(G)$.
\end{lemma}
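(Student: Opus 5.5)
Adding an edge cannot decrease the matching number, so $\nu(G')\ge\nu(G)$. It remains to show $\nu(G')\le\nu(G)$. I will do this by exhibiting a vertex set whose removal leaves many odd components, and then applying the Tutte--Berge deficiency bound. That bound says that for any graph $H$ and any $S\subseteq V(H)$, the number of vertices uncovered by a maximum matching is at least $o_H(V\sm S)-|S|$, where $o_H$ counts odd components of $H-S$.

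The natural choice is $S=A$. By \Cref{thm:GE}, each component of $G[D]$ is factor-critical, hence has an odd number of vertices. Also by definition of the decomposition, $G$ has no edges between $C$ and $D$. So in $G-A$ the components of $D$ are exactly the components of $G[D]$, and all $k(D)$ of them are odd. Moreover, \Cref{thm:GE} gives that any maximum matching of $G$ contains a perfect matching of $C$. Hence every component of $G[C]$ has even order; note that $C$ is a union of components of $G-A$.

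Now let $f=cd$ be the added edge, with $c\in C$ and $d\in D$, and consider $G'-A$. This graph equals $G-A$ plus the edge $f$. Adding $f$ merges the even component $K\ni c$ of $G[C]$ with the odd component $K'\ni d$ of $G[D]$ into a single component of odd order $|K|+|K'|$. All other components are unchanged. So the number of odd components of $G'-A$ is still $k(D)$.

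By Tutte--Berge, $G'$ then has at least $k(D)-|A|$ uncovered vertices. By \Cref{thm:GE}, this equals $|V|-2\nu(G)$, so $\nu(G')\le\nu(G)$. The step needing most care is justifying that the components of $C$ in $G-A$ are even, which relies on the perfect matching of $C$ from \Cref{thm:GE}. Even without this, merging $K$ with $K'$ leaves at least one odd component where $K'$ was, so the odd count does not drop. Hence that step is not actually an obstacle, and the argument goes through directly.
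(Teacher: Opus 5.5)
Your proof is correct, but it takes a different route from the paper.

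\textbf{The paper's argument.} Since $\nu(G)\le\nu(G')\le\nu(G)+1$, if the matching number increased then every maximum matching $M'$ of $G'$ would have to use the new edge $e=\{u,v\}$. Then $M'\setminus e$ would be a maximum matching of $G$ missing $u\in C$, contradicting that vertices of $C$ are essential.

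\textbf{Your argument.} You certify the upper bound with the Tutte--Berge barrier $A$. Using the structure from \Cref{thm:GE} (odd factor-critical $D$-components, even $C$-components, no $C$--$D$ edges), you show that $G'-A$ still has exactly $k(D)$ odd components. Hence the deficiency of $G'$ is at least $k(D)-|A|=|V|-2\nu(G)$.

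\textbf{What each buys.} The paper's argument is more elementary: it uses only the definition of essential vertices. It also shows more generally that adding any edge at an essential vertex, including one at $A$, leaves $\nu$ unchanged. Yours needs the full structure theorem plus the easy direction of Tutte--Berge. In exchange, it shows that $A$ remains a tight barrier in $G'$. It also handles edges added inside a single $D$-component, which the essential-vertex argument does not directly cover.

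\textbf{One correction.} Your closing remark that evenness of the $C$-components is not actually needed is false as stated. If $K$ were odd, then $K\cup K'$ would be even, and the number of odd components would drop by two. This does not damage the proof, because you correctly derive evenness from the perfect matching of $C$ in \Cref{thm:GE}. (A parity argument could also rescue it, since $|V|-2\nu(G')\equiv k(D)-|A| \pmod 2$.) The remark as written should be dropped.
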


\begin{proof}
  Let $e=\{u,v\}$ be the added edge with $u\in C$ and $v\in D$.
  Write $\nu=\nu(G)$ and $\nu'=\nu(G')$.
  Evidently, $\nu\le\nu'\le\nu+1$.
  Let $M'$ be a maximum matching of $G'$.
  If $\nu'=\nu+1$ then $M'$ must contain $e$.
  But this implies that $M=M'\sm e$ is a maximum matching of $G$
  that does not cover $u$,
  contradicting the assumption that $u\in C$.
  Thus, $\nu'=\nu$.
\end{proof}

We now show that if $H$ is a $s$-connector then
$CD$-saturation either makes $C$ small or keeps it very large
(in terms of $s$).
\begin{lemma}[$CD$-saturation]\label{lem:CD:sat}
  Let $H=(V,F)$ be an $s$-connector
  and let $G=(V,E)$ be a spanning subgraph of $H$.
  Let $G'=\Call{CD-saturate}{G;H}$ and write $\GE(G')=(C',A',D')$.
  Then $G\subseteq G'\subseteq H$,
  either $|C'|<s$ or $|C'|>|V|-2s$,
  and $\nu(G')=\nu(G)$.
\end{lemma}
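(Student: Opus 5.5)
The three conclusions are handled separately: containment and $\nu(G')=\nu(G)$ follow by induction over the iterations of \cref{alg:CD:sat}, and the dichotomy on $|C'|$ comes from the connector property applied at termination.

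\textbf{Containment.} The procedure only adds edges of $F$ to $E$, so $G\subseteq G'\subseteq H$ is immediate. It also terminates, since each iteration adds a new edge of $F\sm E$; an $f\in F(C,D)$ is never already in $E$, because $G$ has no $C$--$D$ edges.

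\textbf{Matching number.} Each iteration starts from the current graph with decomposition $(C,A,D)$ and picks $f\in F(C,D)$. The existence of $f$ forces $C\ne\es$ and $D\ne\es$. So \cref{lem:CD:edges} applies and the matching number is unchanged by adding $f$. By induction over the iterations, $\nu(G')=\nu(G)$.

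\textbf{Dichotomy.} At termination $F(C',D')=\es$, so $H$ has no edges between $C'$ and $D'$. Since $H$ is an $s$-connector, $\min(|C'|,|D'|)<s$.
\begin{itemize}
\item If $|C'|<s$, we are done.
\item Otherwise $|D'|\le s-1$, and it remains to bound $|A'|$. Every vertex of $A'$ is matched, in any maximum matching, to a vertex of $D'$ in a distinct component (\cref{thm:GE}), so $|A'|\le k(D')\le|D'|\le s-1$.
\end{itemize}
In the second case $|C'|=|V|-|A'|-|D'|\ge|V|-2(s-1)>|V|-2s$.

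The only point needing care is this bound $|A'|\le|D'|$ via the Gallai--Edmonds theorem; everything else is direct bookkeeping.
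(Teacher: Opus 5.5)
Your proof is correct and follows the paper's argument essentially step for step. It uses the absence of $H$-edges between $C'$ and $D'$ at termination, the connector property to force $|C'|<s$ or $|D'|<s$, the bound $|A'|\le k_{G'}(D')\le|D'|$ from \cref{thm:GE} in the latter case, and repeated use of \cref{lem:CD:edges} for $\nu(G')=\nu(G)$; you also make explicit two points the paper leaves implicit, namely that each added edge is new (so the loop terminates) and that $F(C,D)\ne\es$ forces $C,D\ne\es$ as \cref{lem:CD:edges} requires.
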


\begin{proof}
  We first recall that the algorithm terminates (after at most $|F|-|E|$ steps).
  By the termination condition we know that in $H$ there are no edges between $C'$ and $D'$.
  As $H$ is an $s$-connector, either $|C'|<s$ or $|D'|<s$.
  In the latter case, by \cref{thm:GE},
  $|A'|\le k_{G'}(D')\le|D'|<s$,
  and thus $|C'|=|V|-|A'|-|D'|> |V|-2s$.
  Then $\nu(G')=\nu(G)$ by repeated application of \cref{lem:CD:edges}.
\end{proof}

Our second routine simply removes every edge of $G$ that is incident with $C$;
see \cref{alg:C:isol}.

\begin{algorithm}
  \caption{$C$-isolation}\label{alg:C:isol}
\begin{algorithmic}
  \Procedure{C-isolate}{$G=(V,E)$}
    \State $(C,A,D)\gets \GE(G)$
    \State $E\gets E\sm E(C,V)$
    \State \Return $G$
  \EndProcedure
\end{algorithmic}
\end{algorithm}

\begin{lemma}[$C$-isolation]\label{lem:C:isol}
  Let $G=(V,E)$ be a graph with $\GE(G)=(C,A,D)$.
  Write $G'=\Call{C-isolate}{G}$ and $\GE(G')=(C',A',D')$.
  Then $C'=\es$, $A'=A$, and $\nu(G')=\nu(G)-|C|/2$.
\end{lemma}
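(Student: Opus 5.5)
The plan is to read off everything from the Gallai--Edmonds Structure Theorem (\cref{thm:GE}) applied to $G$, and then show that the decomposition $(C',A',D')$ of $G'$ equals $(\es, A, D\cup C)$.

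\emph{Step 1: the matching number.} By \cref{thm:GE}, every maximum matching $M$ of $G$ contains a perfect matching of $C$. Moreover, no edge of $M$ joins $C$ to $V\sm C$: the vertices of $C$ are already covered by the perfect matching of $C$. Hence $M'=M\sm E(C,V)$ is a matching of $G'$ of size $\nu(G)-|C|/2$, and it is the part of $M$ covering $A\cup D$, which is exactly what \cref{thm:GE} describes. For the reverse inequality, every edge of $G'$ lies inside $V\sm C$, so $\nu(G')=\nu(G-C)$. Any matching $N$ of $G-C$ can be combined with a perfect matching of $G[C]$ (which exists by \cref{thm:GE}) to give a matching of $G$ of size $|N|+|C|/2$. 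Hence $\nu(G-C)\le \nu(G)-|C|/2$, and equality holds.

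\emph{Step 2: the vertices of $C$ become inessential.} In $G'$ every vertex of $C$ is isolated. An isolated vertex is uncovered by every matching, so it is certainly missed by some maximum matching. Thus $C\subseteq D'$. I expect $C'=\es$ to follow once the remaining vertices are identified.

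\emph{Step 3: $D$ stays inessential and $A$ stays essential.} Take $v\in D$. Some maximum matching $M$ of $G$ misses $v$, and then $M\sm E(C,V)$ is a maximum matching of $G'$ (by Step 1) that still misses $v$. So $D\subseteq D'$.

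Now take $a\in A$ and suppose some maximum matching $N$ of $G'$ misses $a$. Adding a perfect matching of $C$ to $N$ gives a matching of $G$ of size $\nu(G)$ that misses $a$. This contradicts $a$ being essential in $G$. So $A\cap D'=\es$, and therefore $D'=D\cup C$.

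\emph{Step 4: identify $A'$ and $C'$.} By definition, $A'$ consists of the vertices outside $D'$ with a $G'$-neighbour in $D'$, so $A'\subseteq V\sm D'=A$. Conversely, every $a\in A$ has a $G$-neighbour in $D$, and that edge avoids $C$, so it survives in $G'$; hence $A\subseteq A'$. Finally, $C'=V\sm(A'\cup D')=\es$.

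The only delicate point is the reverse inequality in Step 1, namely that deleting $C$ loses exactly $|C|/2$ from the matching number. This is handled by gluing a perfect matching of $C$ onto any matching of $G-C$. The same gluing argument is what gives the essentiality of $A$ in Step 3.
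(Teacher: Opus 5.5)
Your proof is correct and follows essentially the same route as the paper: you compute $\nu(G')=\nu(G-C)=\nu(G)-|C|/2$ from \cref{thm:GE}, obtain $C\cup D\subseteq D'$ from the isolation of $C$ and by trimming a maximum matching that misses $v\in D$, and then read off $A'=A$ and $C'=\es$. Your gluing argument shows that vertices of $A$ stay essential in $G'$ (so $A\cap D'=\es$), which makes explicit a step the paper uses only implicitly when it concludes $A\subseteq A'$.
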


\begin{proof}
  Note that every $c\in C$ is isolated in $G'$,
  hence $C\subseteq D'$.
  Write $G^\circ=G'[A\cup D]=G[A\cup D]$.
  Then $G'$ is obtained from $G^\circ$ by adding isolated vertices,
  so $\nu(G')=\nu(G^\circ)=\nu(G)-|C|/2$ by \cref{thm:GE}.
  Next we claim that $D\subseteq D'$.
  To see this, consider any $w\in D$,
  and let $M$ be a maximum matching of $G$ with $w$ unmatched.
  Obtain $M^\circ$ from $M$ by removing any edge which lies in $C$.
  In particular, $M^\circ$ is a maximum matching of $G^\circ$ with $w$ unmatched,
  and so a maximum matching of $G'$ that leaves $w$ unmatched.
  Hence $w\in D'$, so $D\subseteq D'$, as claimed.
  We deduce $A \subseteq A'$, as any $x \in A$
  has a neighbour in $D$, which is in $D'$, so $x \in A'$.
  Conversely, as $C$ is isolated in $G'$ we have $C\cup D\subseteq D'$,
  i.e.~$A'\subseteq A$, so $A=A'$.
 \end{proof}

\section{Main algorithm}\label{sec:comp:q}

This section contains the algorithm
used to prove our main theorem. We start in \Cref{subsec:decycling}
with the decycling algorithm, which is a multicolour compression.
In \Cref{subsec:dominant} we show that the output of decycling has
a rather simple structure: all but $O(s)$ vertices
belong to a single monochromatic component.
We combine all the previous ingredients in our main algorithm,
distilling, which is presented in \Cref{subsec:distil}.
We conclude the section by proving \Cref{thm:main} in \Cref{subsec:proof}.

\subsection{Decycling} \label{subsec:decycling}

This subsection discusses a procedure for removing cycles
from the hypergraph of monochromatic cliques (defined below).
Our goal in \Cref{lem:decycle} is to `delete' a set $T \subseteq V$ such that
(a) the remaining cliques form a hyperforest, and
(b) the sum of matching numbers over all colours decreases by at least $|T|$.
For accounting purposes, rather than deleting $T$, we will delete all coloured edges
incident to $T$ and add all possible `uncoloured' edges incident to $T$.

To make this precise, we require some further definitions.
A \defn{$q$-multicolouring} of a graph $G=(V,E)$
is a sequence $\cG=(G_1,\dots,G_q)$
where $G_j=(V,E_j)$ is a simple graph for all $j\in[q]$,
and $\bigcup_j E_j=E$.
We call $G$ the \defn{underlying graph}
of the \defn{coloured graph} $\cG$.
An edge $e$ is said to have colour $j$ if $e\in E_j$.
Note that we allow an edge to have multiple colours.
We lose no generality by allowing multicolourings in the proof of \cref{thm:main},
and from now on we use the term ``$q$-colouring'' to mean a $q$-multicolouring in this sense.

We extend the notion of coloured graphs to
allow for a set of uncoloured edges,
which we will denote by $G_0$.
In particular, if $G_0=(V,\es)$ is empty,
we identify $(G_1,\dots,G_q)$ with $(G_0,\dots,G_q)$.
We write $\nu(\cG) = (\nu(G_1), \dots, \nu(G_q))$,
and let $\mns(\cG)=\|\nu(\cG)\|_1=\sum_{j=1}^q\nu(G_j)$,
so $\mns((G_0,\dots,G_q)) = \mns((G_1,\dots,G_q))$,
although $E = \bigcup_{j=0}^q E_j$ may differ from
$\bigcup_{j=1}^q E_j$ due to uncoloured edges.
For $S\subseteq V$,
say that $\cG$ is an \defn{$S$-proper} $q$-colouring of $G=(V,E)$
if $E_0=E_G(S,V)$ and $E_0$ is disjoint from $E_1,\dots,E_q$.
We say it is \defn{proper} if it is $S$-proper for some $S\subseteq V$.
If $E_0=\es$, we say that $\cG$ is \defn{fully-coloured}.

For a hypergraph $\cH=(U,\vect{H})$,
we define the \defn{incidence graph} of $\cH$,
denoted $I_\cH$, as follows.
The vertex set of $I_\cH$ is $U\cup\vect{H}$,
partitioned into two parts $U$ and $\vect{H}$.
A vertex pair $\{u,S\}$
with $u\in U$ and $S\in\vect{H}$
is connected by an edge if and only if $u\in S$.
We say that $\cH$ is a (loose) \defn{hyperforest}\footnote{Hyperforests are often called \defn{Berge-acyclic} hypergraphs.
}if $I_\cH$ is a forest (i.e.,~has no cycles).

Let $\cG=(G_0,G_1,\dots,G_q)$ be a $q$-colouring of $G=(V,E)$.
Denote by $\cC(\cG)$ the family of connected components of $G_1,\dots,G_q$,
thought of as a (multi)hypergraph on the vertex set $V$.
Say that $\cG$ is \defn{acyclic}
if $\cC(\cG)$ is a hyperforest.
For $j\in[q]$,
write $\GE(G_j)=(C_j,A_j,D_j)$,
and
let $K_j^1,\dots,K_j^{\iota_j}$ be the connected components of $G_j[D_j]$.
Set $\cK=\cK(\cG)=(V,\{K_j^i:j\in[q],\ i\in[\iota_j]\})$.
Set further $\cA=\cA(\cG)=(V,\{A_1,\dots,A_q\})$.
Define $r_\cG:\cP(V)\to\ZZ$ as follows:
\[
  r_\cG(T) = \sum_{j=1}^q |A_j\cap T|
         + \sum_{j=1}^q\sum_{i=1}^{\iota_j} (\nu(G_j[K_j^i])-\nu(G_j[K_j^i\sm T])),
\]
and set $\sigma_\cG(T)=r_\cG(T)-|T|$.
Note each of summands in the definition of $r_\cG$ is monotone increasing in $T$.
If $\cG$ is clear from the context, we may write $\sigma=\sigma_\cG$.
We say that $T\subseteq V$ is \defn{$\sigma$-maximal}
if for every $S\subseteq V$ we have $\sigma(S)\le\sigma(T)$,
and for every $T'\supsetneq T$ we have $\sigma(T')<\sigma(T)$.
Note that since $\sigma(\es)=0$,
a $\sigma$-maximal set $T$ satisfies $\sigma(T)\ge 0$.

For a $q$-colouring $\cG=(G_0,\dots,G_q)$ of a graph $G=(V,E)$,
write $\unclr(\cG;S)=(G_0',\dots,G_q')$,
where $G_j'=(V,E_j')$,
$E_0'=E_0\cup E_G(S,V)$,
and $E_j'=E_j\sm E_G(S,V)$ for $j=1,\dots,q$.
(In words, we remove any edge of $G$ that is incident to $S$ from $G_1,\dots,G_q$
and add it to $G_0$.)
We note that if $\cG'=\unclr(\cG;S)$ then
(a) $\cG'$ is a $q$-colouring of $G$;
(b) if $\cG$ is proper then so is $\cG'$.

\begin{lemma}[$\sigma$-maximal sets]\label{lem:sigma}
  Let $\cG=(G_0,G_1,\dots,G_q)$ be a $q$-colouring of $G=(V,E)$
  and let $\sigma=\sigma_\cG$.
  Suppose $T\subseteq V$ is $\sigma$-maximal,
  and let $\cG'=\unclr(\cG;T)$.
  Then
  \begin{enumerate}
    \item\label{it:sig:Y} For every $j\in[q]$, $T\supseteq A_j$;
    \item\label{it:sig:X} For every $j\in[q]$ and $i\in[\iota_j]$,
      every connected component of $G_j[K_j^i\sm T]$ is factor-critical;
    \item\label{it:sig:hf} $\cK(\cG')$ is a hyperforest.
  \end{enumerate}
\end{lemma}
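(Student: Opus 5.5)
The plan is to prove \ref{it:sig:Y} and \ref{it:sig:X} by a one-vertex exchange argument against $\sigma$-maximality, and then derive \ref{it:sig:hf} from them by an exchange argument along a hypothetical cycle. The repeatedly used mechanism is this: if $T'\supsetneq T$ and $r_\cG(T')-r_\cG(T)\ge |T'\sm T|$, then $\sigma(T')\ge\sigma(T)$, contradicting the strict inequality in the definition of $\sigma$-maximality. Here we use that every summand of $r_\cG$ is monotone in $T$.

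For \ref{it:sig:Y}, suppose $a\in A_j\sm T$ and set $T'=T\cup\{a\}$. The term $|A_j\cap T|$ increases by $1$, and all other terms are non-decreasing. Hence $r$ increases by at least $1=|T'\sm T|$, which is a contradiction.

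For \ref{it:sig:X}, let $L$ be a component of $G_j[K_j^i\sm T]$ and suppose $L$ is not factor-critical. I first record Gallai's lemma as a consequence of \cref{thm:GE}: if a connected graph has every vertex inessential, then $D=V$, so $A=\es$ and the single component of $D$ is factor-critical. So $L$ has an essential vertex $v$, i.e.\ $\nu(L-v)=\nu(L)-1$. Since $L$ is a component of $G_j[K_j^i\sm T]$, we get $\nu(G_j[K_j^i\sm(T\cup\{v\})])=\nu(G_j[K_j^i\sm T])-1$. Thus $T'=T\cup\{v\}$ again increases $r$ by at least $1$, which is a contradiction.

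For \ref{it:sig:hf}, I will first identify $\cK(\cG')$. In $\cG'$, each colour class is $G'_j=G_j$ with all edges at $T$ removed, so vertices of $T$ are isolated. Isolated vertices are inessential singleton components of $D'_j$, and singleton hyperedges are leaves in the incidence graph, so they never lie on cycles. On $V\sm T$ we have $G'_j[V\sm T]=G_j-T$. By \ref{it:sig:Y}, $A_j\subseteq T$, and $G_j$ has no $C_j$--$D_j$ edges. Hence $G_j-T$ is the disjoint union of $G_j[C_j\sm T]$ and the components of the graphs $G_j[K_j^i\sm T]$. By \ref{it:sig:X}, each of the latter components is factor-critical, hence consists of inessential vertices, so it is a component of $G'_j[D'_j]$.

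Now suppose the incidence graph of $\cK(\cG')$ has a cycle $v_1,L_1,v_2,L_2,\dots,v_k,L_k$ with $k\ge2$ and all $L_\ell$ non-singleton. Hyperedges of one colour are disjoint, so consecutive $L_\ell$ have different colours. Each $L_\ell$ contains two removed cycle vertices. For a factor-critical $L$ and distinct $u,w\in L$, the graph $L-u$ has a perfect matching, so $\nu(L-u-w)=\nu(L)-1$. Set $S=\{v_1,\dots,v_k\}$ and $T'=T\cup S$. Each $L_\ell$ coming from some $K_j^i\sm T$ then contributes an increase of at least $1$ to $r$, and distinct $L_\ell$ contribute separately. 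If all $L_\ell$ are of this type, $r$ increases by at least $k=|S|$, giving $\sigma(T')\ge\sigma(T)$ and the required contradiction.

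I expect the main obstacle to be the hyperedges of $\cK(\cG')$ arising inside $G_j[C_j\sm T]$. These are factor-critical components of the $D$-part of $G_j[C_j\sm T]$, but $r_\cG$ does not register their matching loss. What I would try first is to show that $G_j[C_j\sm T]$ still has a perfect matching, so that it contributes no $D'_j$-vertices at all. I would argue by maximality of $T$: if $c\in C_j\cap T$, I would compare $\sigma(T)$ with $\sigma(T\sm\{c\})$, using that $c$ gains nothing in colour $j$ and must therefore pay for itself in other colours. Alternatively, I would try to reroute any cycle through such a component into one through $K$-type hyperedges. If neither works, the fallback is to prove \ref{it:sig:hf} only for the $K$-type hyperedges and check that this is what is used downstream.
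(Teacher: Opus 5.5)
Your proofs of (1) and (2), and your cycle-exchange argument in (3) for hyperedges that are components of some $G_j[K_j^i\sm T]$, are exactly the paper's argument. That includes the remark that vertices of $T$ lie only in singleton hyperedges, which are leaves of the incidence graph.

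The obstacle you isolate is genuine, and it cannot be overcome: conclusion (3) is false for colourings with some $C_j\ne\es$. Take $V=\{a,b,c,w,x,y,z\}$, $E_1=\{xy,yz,zx,zc\}$ and $E_2=\{ca,cb,xy,yw,wx\}$. Multicolourings are allowed; if you want edge-disjoint classes, replace the colour-$2$ triangle by a $5$-cycle $xw_1yw_2w_3$, and the same computation goes through.
\begin{itemize}
\item Since $\{xy,zc\}$ is the unique maximum matching of $G_1$, we get $C_1=\{c,x,y,z\}$ and $A_1=\es$, and $D_1=\{a,b,w\}$ consists of isolated vertices.
\item In $G_2$ we have $A_2=\{c\}$ and $C_2=\es$, and the components of $G_2[D_2]$ are $\{a\},\{b\},\{z\},\{x,y,w\}$.
\end{itemize}
Hence
\[
\sigma(T)=\bigl(1-\nu(G_2[\{x,y,w\}\sm T])-|T\cap\{x,y,w\}|\bigr)-|T\cap\{a,b,z\}|\le 0,
\]
with equality if and only if $T\subseteq\{c\}$. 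So $T=\{c\}$ is the unique $\sigma$-maximal set. In $\cG'=\unclr(\cG;\{c\})$, colour $1$ is the triangle $xyz$ and colour $2$ is the triangle $xyw$, plus isolated vertices. Both are $D$-pure, so $\cK(\cG')$ contains $\{x,y,z\}$ and $\{x,y,w\}$, and $x,\{x,y,z\},y,\{x,y,w\},x$ is a cycle in the incidence graph. In this example $c\in A_2$ is forced into $T$ by (1), and $G_1[C_1\sm T]$ is a triangle. So neither forcing a perfect matching on $G_j[C_j\sm T]$ via maximality nor rerouting the cycle can work.

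The paper's proof makes the same tacit step: it simply picks $j_a,i_a$ such that $K_a$ is a component of $G_{j_a}[K_{j_a}^{i_a}\sm T]$. Your decomposition of $G_j-T$ shows this is justified only when the parts $G_j[C_j\sm T]$ contribute no non-singleton hyperedges, for instance when every $C_j=\es$. The lemma is applied only in \cref{lem:decycle}, to $AD$-pure colourings, so your fallback is the correct resolution: add the hypothesis that $\cG$ is $AD$-pure. Your identification step then shows that each $G_j'$ is a disjoint union of factor-critical graphs. So $D_j'=V$, and every non-singleton hyperedge of $\cK(\cG')$ is a component of some $G_j[K_j^i\sm T]$. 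Your cycle argument then finishes the proof. When two cycle hyperedges lie in the same $K_j^i$, their gains still add, because $\nu$ is additive over the components of $G_j[K_j^i\sm T]$.
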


\begin{proof}
  Suppose first that $A_j\sm T\ne\es$ for some $j\in[q]$,
  and let $y\in A_j\sm T$.
  Set $T_y=T\cup\{y\}$,
  and note that
  $\sigma(T_y)-\sigma(T)\ge -|T_y|+|T|+|T_y\cap A_j|-|T\cap A_j|\ge 0$,
  a contradiction.
  Similarly,
  suppose that for some $j\in[q]$ and $i\in[\iota_j]$
  there exists a connected component $K'$ of $G_j[K_j^i\sm T]$
  which is not factor-critical.
  Then by \Cref{thm:GE} there exists $x\in K'$ which is essential,
  and so $\nu(G_j[K'\sm\{x\}])<\nu(G_j[K'])$.
  Thus, letting $T_x=T\cup\{x\}$, we have
  \[\begin{aligned}
    \sigma(T_x)-\sigma(T)
      &\ge -|T_x|+|T|
      +(\nu(G_j[K_j^i])-\nu(G_j[K_j^i\sm T_x]))
      -(\nu(G_j[K_j^i])-\nu(G_j[K_j^i\sm T]))\\
      &= -1 + \nu(G_j[K_j^i\sm T]) - \nu(G_j[K_j^i\sm T_x])\\
      &\ge -1 + \nu(G_j[K']) - \nu(G_j[K'\sm\{x\}])
      \ge 0,
  \end{aligned}\]
  again a contradiction.

  Finally, suppose that $I=I_{\cK(\cG')}$
  contains a cycle $(u_1,K_1,\dots,u_\ell,K_\ell,u_1)$
  of length $2\ell$ for some $\ell\ge 2$
  with $u_a\in V\sm T$,
  $u_a\in K_a\cap K_{a-1}$ (where indices are taken modulo $\ell$),
  and $K_a\in\cK(\cG')$ for all $a\in[\ell]$.
  For every $a\in[\ell]$
  pick $j_a\in[q]$ and $i_a\in[\iota_{j_a}]$
  such that $K_a$ is a connected component of $G_{j_a}[K_{j_a}^{i_a}\sm T]$.
  Recall that $G_{j_a}[K_a]$ is factor-critical.
  Thus, if $u,v\in K_a$ are two distinct vertices, we have
  $\nu(K_a\sm\{u,v\})<\nu(K_a)$.
  Set $L=\{u_1,\dots,u_\ell\}$
  and $T^\circ=T\cup L$.
  As $|K_a\cap L|\ge 2$ for all $a\in[\ell]$,
  we have
  $\nu(G_{j_a}[K_a\sm L])<\nu(G_{j_a}[K_a])$.
  Thus,
  we obtain the contradiction
  \[\begin{aligned}
    &\sigma(T^\circ)-\sigma(T)\\
      &\ge -|T^\circ|+|T|
      + \sum_{a=1}^\ell
        \left(\left(\nu(G_{j_a}[K_{j_a}^{i_a}])-\nu(G_{j_a}[K_{j_a}^{i_a}\sm T^\circ])\right)
        -\left(\nu(G_{j_a}[K_{j_a}^{i_a}])-\nu(G_{j_a}[K_{j_a}^{i_a}\sm T])\right)\right)\\
      &\ge -\ell
      + \sum_{a=1}^\ell
        \left(\nu(G_{j_a}[K_{j_a}^{i_a}\sm T])-\nu(G_{j_a}[K_{j_a}^{i_a}\sm T^\circ])\right)\\
      &\ge -\ell
      + \sum_{a=1}^\ell
        \left(\nu(G_{j_a}[K_a])-\nu(G_{j_a}[K_a\sm L])\right)
      \ge 0,
  \end{aligned}\]
  and the statement follows.
\end{proof}

Say that a graph $G$ with $\GE(G)=(C,A,D)$ is
\defn{$AD$-pure} if $C=\es$
and \defn{$D$-pure} if $C=A=\es$.
Analogously,
say that $\cG$ is \defn{$AD$-pure} if $G_j$ is $AD$-pure for all $j\in[q]$,
and \defn{$D$-pure} if $G_j$ is $D$-pure for all $j\in[q]$.
Call $\cG$ \defn{$D$-acyclic} if it is $D$-pure and acyclic
(so $\cK(\cG)=\cC(\cG)$ is a hyperforest).

\begin{algorithm}
  \caption{Decycling}\label{alg:decycle}
\begin{algorithmic}
  \Procedure{Decycle}{$\cG=(G_0,G_1,\dots,G_q)$}
    \State {\bf let} $T\subseteq V$ be $\sigma_\cG$-maximal
    \State \Return $\unclr(\cG;T),T$
  \EndProcedure
\end{algorithmic}
\end{algorithm}

\begin{lemma}[Decycling]\label{lem:decycle}
  Let $\cG$ be a fully-coloured $AD$-pure
  $q$-colouring of $G=(V,E)$,
  and let $\cG',T=\Call{Decycle}{\cG}$.
  Then
  \begin{enumerate}
    \item\label{it:dec:nu} $\nu(\cG')\le\nu(\cG)$;
    \item\label{it:dec:mns} $\mns(\cG') \le \mns(\cG)-|T|$;
    \item\label{it:dec:prop} $\cG'$ is a $T$-proper $D$-acyclic $q$-colouring of $G$.
  \end{enumerate}
\end{lemma}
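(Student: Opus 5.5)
Write $G_j'$ for the colour-$j$ graph of $\cG'=\unclr(\cG;T)$. By definition $G_j'=G_j\sm E(T,V)$, so $G_j'$ is $G_j[V\sm T]$ together with the vertices of $T$ as isolated vertices. The plan is to first identify the Gallai--Edmonds decomposition of each $G_j'$, and then read off all three items. Since $\cG$ is $AD$-pure, $C_j=\es$, so $V=A_j\cup D_j$. By \Cref{lem:sigma}\ref{it:sig:Y}, $A_j\subseteq T$. Therefore $G_j[V\sm T]=G_j[D_j\sm T]$, which is the disjoint union of the graphs $G_j[K_j^i\sm T]$ over $i\in[\iota_j]$.

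\emph{Purity and acyclicity.} By \Cref{lem:sigma}\ref{it:sig:X}, every connected component of every $G_j[K_j^i\sm T]$ is factor-critical. So every component of $G_j'$ is either factor-critical or an isolated vertex of $T$, which is also factor-critical. In a graph all of whose components are factor-critical, every vertex is missed by some maximum matching. Hence $D(G_j')=V$ and $C=A=\es$, so $\cG'$ is $D$-pure.

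Consequently $\cK(\cG')=\cC(\cG')$ consists of the components of the $G_j[K_j^i\sm T]$ together with singletons $\{t\}$ for $t\in T$, one for each colour $j$. A singleton hyperedge is a leaf in the incidence graph and creates no cycle. Indeed, the components of $G_j'$ restricted to $V\sm T$ are exactly the hyperedges of $\cK(\cG')$ as defined in \Cref{lem:sigma}. That lemma's item \ref{it:sig:hf} then gives that $\cC(\cG')$ is a hyperforest, i.e.\ $\cG'$ is acyclic. The coloured edges of $\cG'$ avoid $E_G(T,V)$ and $E_0'=E_G(T,V)$, so $\cG'$ is $T$-proper. This gives \ref{it:dec:prop}.

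\emph{Matching numbers.} Item \ref{it:dec:nu} is immediate, since $G_j'\subseteq G_j$. For \ref{it:dec:mns}, I would compute $\nu(G_j)$ via \Cref{thm:GE}. Every maximum matching of $G_j$ consists of near-perfect matchings of each $K_j^i$ plus $|A_j|$ edges from $A_j$ to distinct $D$-components. So
\[\nu(G_j)=|A_j|+\sum_i\nu(G_j[K_j^i]),\qquad \nu(G_j')=\sum_i\nu(G_j[K_j^i\sm T]).\]
Subtracting and summing over $j$, and using $A_j\cap T=A_j$, gives $\mns(\cG)-\mns(\cG')=r_\cG(T)=\sigma(T)+|T|\ge|T|$. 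Here the final inequality holds because $\sigma(T)\ge\sigma(\es)=0$ by $\sigma$-maximality.

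The main obstacle is the bookkeeping that identifies $\cK(\cG')$ with the hypergraph to which \Cref{lem:sigma}\ref{it:sig:hf} applies, together with checking that the isolated vertices in $T$ and possible multi-edges (the same component appearing in several colours) create no cycles. Parallel identical hyperedges of size at least $2$ would form a $4$-cycle in the incidence graph. However, such cycles are exactly the ones already ruled out by \ref{it:sig:hf}, so the argument reduces to confirming that the two hypergraphs coincide apart from singleton hyperedges.
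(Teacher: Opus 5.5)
Your proposal is correct and follows essentially the same route as the paper: \cref{lem:sigma} supplies $A_j\subseteq T$, factor-critical pieces (hence $D$-purity) and the hyperforest property. Summing $\nu(G_j)-\nu(G_j')=|A_j\cap T|+\sum_i(\nu(G_j[K_j^i])-\nu(G_j[K_j^i\sm T]))$ over $j$ then gives $\sigma(T)+|T|\ge|T|$. The differences are minor: you read $\nu(G_j)$ off \cref{thm:GE} directly rather than via \cref{lem:stab}, and you make explicit that the isolated vertices of $T$ contribute only leaf (singleton) hyperedges, a point the paper leaves implicit.
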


\begin{proof}
  Write $\cG'=(G_0',\dots,G_q')$.
  Note first that for each $j\in[q]$,
  $G_j'=G_j-E_G(T,V)$,
  so $\nu(G_j')\le\nu(G_j)$,
  thus $\nu(\cG')\le\nu(\cG)$,
  which settles (\ref{it:dec:nu}).

  Let $\sigma=\sigma_\cG$, and recall that $T$ is $\sigma$-maximal.
  Note that $\nu(G_j')=\nu(G_j[V\sm T])$ for $j\in[q]$.
  We determine $\nu(G_j[V \sm T])$ by
  considering the effects of removing $T\cap A_j$
  and subsequently $T\cap D_j$ from $G_j$.
  By \cref{lem:sigma}(\ref{it:sig:Y}), $A_j\subseteq T$, so by \cref{lem:stab},
  removing $T\cap A_j=A_j$ from $G_j$ reduces $\nu(G_j)$ by $|A_j|$.
  Writing $G_j^* = G_j[V \sm A_j]$,
  we have $\nu(G_j^*)=\nu(G_j)-|A_j\cap T|$, and $G_j^*$ is $D$-pure.

  Next we remove $T\cap D_j$ from $G_j^*$
  in steps, by removing $T\cap K_j^i$ for every $i\in[\iota_j]$.
  For each $i$, by \cref{lem:sigma}(\ref{it:sig:X}),
  every connected component of $G_j[K_j^i\sm T]$ is factor-critical.
  Writing $G_j^\circ=G_j[V\sm T]$,
  we have
  $\nu(G_j^\circ)
    =\nu(G_j^*)-\sum_{i\in[\iota_j]}(\nu(G_j[K_j^i])-\nu(G_j[K_j^i\sm T]))$,
  and $G_j^\circ$ is $D$-pure
  (which also implies that $G_j'$ is $D$-pure).
  To conclude,
  \[
    \nu(G_j') = \nu(G_j^\circ)
      = \nu(G_j)-|A_j\cap T|
        - \sum_{i\in[\iota_j]}(\nu(G_j[K_j^i])-\nu(G_j[K_j^i\sm T])).
  \]
  Since $T$ is $\sigma$-maximal we have $\sigma(T)\ge 0$,
  so (\ref{it:dec:mns}) follows from
  \[\begin{aligned}
    \mns(\cG')
    = \sum_j\nu(G_j')
      &= \sum_j \nu(G_j)
         -\left(
          \sum_j|A_j\cap T|
          +\sum_j\sum_{i\in[\iota_j]}(\nu(G_j[K_j^i])-\nu(G_j[K_j^i\sm T]))
         \right)\\
      &= \mns(\cG)-(\sigma(T)+|T|)
      \le \mns(\cG)-|T|.
  \end{aligned}\]
  Finally, $\cG'$ is
  (a) $T$-proper, since by definition $E(G_0')=E_{G'}(T,V)$,
  and $E_{G_j'}(T,V)=\es$ for all $j\in[q]$,
  and (b) it $D$-acyclic since it is $D$-pure and
  by \cref{lem:sigma}(\ref{it:sig:hf}) $\cK(\cG')$ is a hyperforest.
  This settles~(\ref{it:dec:prop}).
\end{proof}

\subsection{A dominant component} \label{subsec:dominant}

In this subsection, we find that an acyclic colouring
of an $s$-connector has a rather simple structure: we show in
\Cref{lem:smallcomps} that all but $O(s)$ vertices belong to
one component in $\cC(\cG)$.

First we need several small preparatory lemmas.
For two (not necessarily disjoint) vertex sets $X,Y$ in a graph $G$,
denote by $\nu_G(X,Y)$ the size of the maximum matching of $G$
in which every edge has one endpoint in $X$ and one in $Y$.

\begin{lemma}\label{lem:bridge}
  Suppose $\cG$ is a proper acyclic $q$-colouring of $G$.
  Then, for every distinct $K_1,K_2\in\cC(\cG)$
  it holds that
  $|K_1\cap K_2|\le 1$
  and $\nu_G(K_1,K_2)\le 1$.
\end{lemma}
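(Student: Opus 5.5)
The plan is to derive both assertions from the hyperforest property of $\cC(\cG)$ by exhibiting a cycle in the incidence graph $I_{\cC(\cG)}$, plus one preliminary observation about uncoloured edges.

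\emph{First assertion.} Suppose $u\ne v$ both lie in $K_1\cap K_2$. Then $(u,K_1,v,K_2,u)$ is a cycle of length $4$ in $I_{\cC(\cG)}$. Here $K_1,K_2$ are distinct nodes because they are distinct members of the multihypergraph, and $u\ne v$. This contradicts acyclicity, so $|K_1\cap K_2|\le 1$.

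\emph{Preliminary observation.} Since $\cG$ is $S$-proper for some $S$, every $G$-edge incident to $S$ is uncoloured and no coloured edge meets $S$. So each $v\in S$ is isolated in every $G_j$, and any component of $\cC(\cG)$ containing $v$ equals $\{v\}$.

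\emph{Second assertion.} Suppose $G$ has a matching $x_1y_1,x_2y_2$ with $x_1,x_2\in K_1$ and $y_1,y_2\in K_2$. The four vertices are distinct. Since $x_1\ne x_2$ lie in $K_1$, we have $|K_1|\ge2$, and likewise $|K_2|\ge 2$. By the preliminary observation, $K_1\cup K_2$ is disjoint from $S$. Hence neither edge is uncoloured: an uncoloured edge has an endpoint in $S$. So $x_iy_i$ has some colour $j_i$, and lies inside a component $L_i\in\cC(\cG)$ of $G_{j_i}$ containing both $x_i$ and $y_i$.

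For each $i$ this gives a $K_1$--$K_2$ walk $P_i$ in $I_{\cC(\cG)}$ using only the vertex-nodes $x_i,y_i$:
\begin{itemize}
\item If $L_i\notin\{K_1,K_2\}$, take $P_i=K_1,x_i,L_i,y_i,K_2$.
\item If $L_i=K_1$, then $y_i\in K_1\cap K_2$, and we take $P_i=K_1,y_i,K_2$.
\item If $L_i=K_2$, then $x_i\in K_1\cap K_2$, and we take $P_i=K_1,x_i,K_2$.
\end{itemize}
Concatenating $P_1$ with the reverse of $P_2$ gives a closed walk. Its vertex-nodes are distinct, being drawn from the disjoint sets $\{x_1,y_1\}$ and $\{x_2,y_2\}$.

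The only possible repetition of a hyperedge-node is $L_1=L_2=L\notin\{K_1,K_2\}$. In that case $x_1,x_2\in L\cap K_1$, so $|L\cap K_1|\ge 2$ with $L\ne K_1$, contradicting the first assertion. Otherwise the closed walk is a genuine cycle of length at least $4$, contradicting that $\cC(\cG)$ is a hyperforest. Hence $\nu_G(K_1,K_2)\le 1$.

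The main point needing care is this degenerate-case bookkeeping: checking that the closed walk really is a cycle. It is handled by the distinctness of the four matched vertices together with the first assertion.
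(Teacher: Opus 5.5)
Your proof is correct and follows essentially the paper's route: a $4$-cycle $K_1,u,K_2,v,K_1$ for the first claim, and for the second a closed walk $K_1,x_1,L_1,y_1,K_2,y_2,L_2,x_2,K_1$ in $I_{\cC(\cG)}$ from which degeneracies are removed. You make explicit, via properness, why both matching edges are coloured (the paper leaves this implicit), and you handle $L_1=L_2$ using the first assertion, where the paper instead deletes backtracks $K,x,K$ to leave a non-trivial closed trail.
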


\begin{proof}
  Suppose that $|K_1\cap K_2|\ge 2$ and let $u,v$ be distinct vertices in $K_1\cap K_2$.
  Then,
  $K_1-u-K_2-v-K_1$ is a cycle in $I_{\cC(\cG)}$,
  a contradiction.
  Suppose now that $\nu_G(K_1,K_2)\ge 2$,
  and let $\{e,f\}$ be a matching in $G$
  with $e=\{u_1,u_2\}$, $f=\{v_1,v_2\}$,
  and $u_i,v_i\in K_i$ for $i=1,2$.
  Let $K_e,K_f\in\cC(\cG)$ such that $e\subseteq K_e$ and $f\subseteq K_f$.
  Note that it is possible that $K_e=K_f$,
  but it is impossible that $K_1=K_2$.
  Now consider the closed walk
  \[
    W:=K_1-u_1-K_e-u_2-K_2-v_2-K_f-v_1-K_1
  \]
  in $I_{\cC(\cG)}$.
  Since $\{u_1,u_2,v_1,v_2\}$ are all distinct,
  and each appears in $W$ exactly once,
  any edge that appears in $W$ twice must appear in the form
  $K-x-K$ for $K\in\{K_1,K_2,K_e,K_f\}$
  and $x\in\{u_1,u_2,v_1,v_2\}$.
  Replacing each such closed subwalk $K-x-K$ with $K$
  results in a non-trivial closed trail $W'$ (since $K_1\ne K_2$),
  contradicting the acyclicity of $\cG$.
\end{proof}

\begin{lemma}\label{lem:sconn}
  If $G$ is an $s$-connector
  then for every integer $\ell\ge 0$
  and for every two disjoint vertex sets $X,Y$ with $|X|,|Y|\ge s+\ell$
  it holds that $\nu_G(X,Y)\ge \ell+1$.
\end{lemma}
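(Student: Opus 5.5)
The plan is a greedy argument: repeatedly pick an edge between the current remainders of $X$ and $Y$ and delete its endpoints. Since $G$ is an $s$-connector, this succeeds as long as both remainders still have at least $s$ vertices.

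Fix disjoint $X,Y$ with $|X|,|Y|\ge s+\ell$, and let $M$ be a maximum matching of $G$ all of whose edges have one endpoint in $X$ and one in $Y$. Suppose for contradiction that $|M|\le \ell$. Let $X'$ be the set of vertices of $X$ not covered by $M$, and define $Y'$ similarly. Since $X$ and $Y$ are disjoint, each edge of $M$ covers exactly one vertex of $X$ and one vertex of $Y$. Hence $|X'|\ge |X|-\ell\ge s$ and $|Y'|\ge |Y|-\ell\ge s$.

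The sets $X'$ and $Y'$ are disjoint, so the $s$-connector property gives an edge $e\in E_G(X',Y')$. This edge is vertex-disjoint from $M$, so $M\cup\{e\}$ is a larger matching of the required form, contradicting maximality. Therefore $\nu_G(X,Y)\ge \ell+1$.

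Equivalently, one can run induction on $\ell$: the case $\ell=0$ is exactly the definition of an $s$-connector, and for the step we take one edge $\{x,y\}$ with $x\in X$, $y\in Y$, then apply the induction hypothesis to $X\sm\{x\}$ and $Y\sm\{y\}$. There is no real obstacle. The only point needing care is that disjointness of $X$ and $Y$ ensures each matching edge uses exactly one vertex from each side, which is what keeps the count of remaining vertices at $|X|-|M|$ and $|Y|-|M|$.
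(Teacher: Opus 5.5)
Your proof is correct and is essentially the paper's argument. The paper inducts on $\ell$, removing one edge $\{x,y\}$ with $x\in X$, $y\in Y$ and applying the hypothesis to $X\sm\{x\}$ and $Y\sm\{y\}$, which is exactly the alternative you sketch; your main maximal-matching contradiction is the same greedy idea phrased without induction, with disjointness of $X$ and $Y$ correctly used to get $|X'|,|Y'|\ge s$.
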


\begin{proof}
  We induct on $\ell$.
  For $\ell=0$, since $G$ is an $s$-connector,
  if $|X|,|Y|\ge s$
  then $E_G(X,Y)\ne\es$,
  hence $\nu_G(X,Y)\ge 1$.
  Assume the statement holds for $\ell-1\ge 0$,
  and let $X,Y$ with $|X|,|Y|\ge s+\ell$.
  Since $G$ is an $s$-connector, there exists $e=\{x,y\}\in E_G(X,Y)$ with $x\in X$ and $y\in Y$.
  Let $X'=X\sm\{x\}$ and $Y'=Y\sm\{y\}$.
  By the induction hypothesis, $\nu_G(X',Y')\ge\ell$.
  Adding $e$ to a maximum matching between $X',Y'$
  results a matching of size at least $\ell+1$ between $X,Y$ in $G$.
\end{proof}

\begin{corollary}[Unique large component]
  \label{cor:comp:unique}
  Suppose $\cG$ is a proper acyclic $q$-colouring of $G=(V,E)$,
  and assume $G$ is an $s$-connector.
  Then, in $\cC(\cG)$ there exists at most one hyperedge of size greater than $s+1$.
\end{corollary}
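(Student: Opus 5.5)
We argue by contradiction, combining \cref{lem:bridge} with \cref{lem:sconn} in the case $\ell=1$. Suppose $K_1,K_2\in\cC(\cG)$ are distinct hyperedges with $|K_1|,|K_2|\ge s+2$.

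First we make the two sets disjoint. By \cref{lem:bridge}, $|K_1\cap K_2|\le 1$. Put $X=K_1\sm K_2$ and $Y=K_2\sm K_1$. These sets are disjoint, and each has size at least $s+2-1=s+1$. Since $G$ is an $s$-connector, \cref{lem:sconn} with $\ell=1$ gives $\nu_G(X,Y)\ge 2$. That is, $G$ contains two disjoint edges, each with one endpoint in $X\subseteq K_1$ and the other in $Y\subseteq K_2$.

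Such a matching witnesses $\nu_G(K_1,K_2)\ge 2$, which contradicts the second part of \cref{lem:bridge}. So at most one hyperedge of $\cC(\cG)$ has size greater than $s+1$.

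The only point needing care is that \cref{lem:bridge} is stated for not-necessarily-disjoint $K_1,K_2$. Its proof only uses a matching $\{u_1u_2,v_1v_2\}$ with $u_i,v_i\in K_i$ and all four vertices distinct. Our matching has this form, since $X$ and $Y$ are disjoint and the two edges are disjoint. The hypotheses that $\cG$ is proper and acyclic are used only to apply \cref{lem:bridge}.
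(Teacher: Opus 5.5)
Your proposal is correct and follows the paper's proof essentially line for line. You use \cref{lem:bridge} to get $|K_1\cap K_2|\le 1$, apply \cref{lem:sconn} with $\ell=1$ to $K_1\sm K_2$ and $K_2\sm K_1$ to obtain $\nu_G(K_1,K_2)\ge 2$, and contradict the second part of \cref{lem:bridge}; your closing remark about the bridge lemma's proof is unnecessary, since $\nu_G(\cdot,\cdot)$ is monotone in both arguments and so $\nu_G(K_1,K_2)\ge\nu_G(X,Y)$ directly.
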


\begin{proof}
  Suppose $K_1,K_2\in\cC(\cG)$ have $|K_1|,|K_2|\ge s+2$.
  By \cref{lem:bridge}, $|K_1\cap K_2|\le 1$.
  Thus, letting $K_1'=K_1\sm K_2$ and $K_2'=K_2\sm K_1$ we have $|K_1'|,|K_2'|\ge s+1$.
  Then, by \cref{lem:sconn}, $\nu_G(K_1,K_2)\ge\nu_G(K_1',K_2')\ge 2$,
  in contradiction to \cref{lem:bridge}.
\end{proof}

A \defn{weighted graph} $G=(V,E,\w)$ is a graph
equipped with a {\em weight function} $\w:V\to[0,\infty)$.
For a vertex set $U\subseteq V$ we denote $\w(U)=\sum_{u\in U}\w(u)$.
A \defn{weighted centroid} of a weighted forest $T=(V,E,\w)$
is a vertex $v\in V$
for which every connected component $S$ of $T-v$ has $\w(S)\le\w(V)/2$.
The next standard lemma shows that every forest has a weighted centroid.

\begin{lemma}[Weighted centroids]
  \label{lem:centroid}
  Let $T=(V,E,\w)$ be a weighted forest.
  Then, $T$ has a weighted centroid.
\end{lemma}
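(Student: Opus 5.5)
The plan is the standard walking argument, applied to one component at a time. The case $V=\es$ is vacuous, and it suffices to treat a tree: if $T$ is a forest, pick a component $T_0$ of maximum weight and find a weighted centroid $v$ of $T_0$. Every component of $T-v$ is then either a component of $T_0-v$, of weight at most $\w(V(T_0))/2\le\w(V)/2$, or a component of $T$ other than $T_0$. It remains to handle the latter, which need not have weight at most $\w(V)/2$.

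For those other components, choose $T_0$ more carefully. If some component $T_1$ of $T$ has $\w(V(T_1))>\w(V)/2$, it is unique, and we take $T_0=T_1$; every other component then has weight less than $\w(V)/2$. If no such component exists, every component has weight at most $\w(V)/2$, and any $T_0$ works. In either case it suffices to find a vertex $v$ of the tree $T_0$ such that every component of $T_0-v$ has weight at most $\w(V(T_0))/2$.

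So assume $T$ is a tree with total weight $W=\w(V)$. For each edge $uv$, removing it splits $T$ into two parts; write $\w_{u\to v}$ for the weight of the part containing $v$. Start at any vertex $v_0$. If every component of $T-v_0$ has weight at most $W/2$, we are done. Otherwise exactly one component $S$ has $\w(S)>W/2$ (two such would exceed $W$), and we move to the unique neighbour $v_1\in S$ of $v_0$.

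The key step is to check that this walk never steps back and so terminates. At $v_1$, the component of $T-v_1$ containing $v_0$ has weight $W-\w(S)<W/2$, so if we move again it is to a different neighbour. More generally, every move goes along an edge $u\to v$ with $\w_{u\to v}>W/2$, and each later candidate component lies strictly inside the previous one. Since $T$ is a finite tree and the walk never backtracks, it is a path and must stop. It stops only at a vertex all of whose components have weight at most $W/2$, which is a weighted centroid.

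A cleaner alternative is extremal. Choose $v$ minimising the maximum component weight of $T-v$. If some component $S$ had weight greater than $W/2$, its neighbour $u\in S$ of $v$ would give components of $T-u$ that are either the part containing $v$, of weight $W-\w(S)<W/2$, or proper subsets of $S$, of weight at most $\w(S)$. This does not strictly decrease the maximum when $u$ has weight zero, so the walking argument is safer and is the one I would write.

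The main obstacle is only this bookkeeping: the zero-weight and forest cases, and making sure termination does not rely on strict weight decrease. Both are resolved above, by arguing termination through non-backtracking on a finite tree rather than through decreasing weights.
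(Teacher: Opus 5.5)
Your proof is correct and is essentially the paper's argument. The paper also walks from an arbitrary vertex to its neighbour in the heaviest component $S_v$ of $T-v$. It argues by contradiction that this infinite walk on a finite tree must backtrack, and backtracking along an edge $v_jv_{j+1}$ would produce two disjoint components each of weight $>W/2$, which is your non-backtracking observation phrased as a contradiction. Your forest reduction is likewise the paper's, namely passing to the heaviest component, but you spell out the justification that every other component has weight at most $\w(V)/2$, which the paper leaves implicit.
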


\begin{proof}
  We may assume $T$ is a tree;
  otherwise, we replace $T$ by its heaviest connected component.
  For $v\in V$, let $S_v$ be a heaviest connected component in $T-v$,
  and let $h(v)$ be the neighbour of $v$ in $S_v$.
  Write $W=\w(V)$.
  Assuming the statement is false, for every $v$ we have $\w(S_v)>W/2$.
  Pick any $v\in V$, write $v_0=v$ and for $i\ge 0$ let $v_{i+1}=h(v_i)$.
  Then, $H=(v_0,v_1,\dots)$ is a walk on $T$,
  and it must backtrack,
  namely, there must exist $j\ge 0$ such that $v_j=v_{j+2}$.
  But this means that $\w(S_{v_j}),\w(S_{v_{j+1}})>W/2$
  and that $S_j\cap S_{j+1}=\es$,
  contradicting $\w(V)=W$.
\end{proof}

We need one final preparatory lemma.

\begin{lemma}\label{lem:sums}
  Let $s_1,\dots,s_\ell$ be positive reals,
  let $n=\sum_{i=1}^\ell s_i$,
  and suppose $s_i\le 2n/3$ for all $i$.
  Then there exists $I\subseteq[\ell]$ such that
  \[
    \sum_{i\in I} s_i \in [n/3,\,2n/3].
  \]
\end{lemma}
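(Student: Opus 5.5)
The plan is a greedy argument. Order the reals so that $s_1\ge s_2\ge\dots\ge s_\ell$. We add them one at a time, stopping as soon as the running sum reaches $n/3$, and then check that at that moment the sum has not passed $2n/3$.

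There are two cases, according to the size of the largest element.

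\emph{Case $s_1\ge n/3$.} By hypothesis $s_1\le 2n/3$, so $I=\{1\}$ already works.

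\emph{Case $s_1<n/3$.} Then every $s_i<n/3$. Consider the partial sums $P_k=\sum_{i\le k}s_i$. They increase from $P_0=0$ to $P_\ell=n\ge n/3$. Let $k$ be the least index with $P_k\ge n/3$; it satisfies $k\ge 1$. Minimality gives $P_{k-1}<n/3$, and so
\[
  P_k=P_{k-1}+s_k<n/3+n/3=2n/3 .
\]
Hence $I=[k]$ gives a sum in $[n/3,2n/3]$. In this case the ordering is not even needed: any ordering works, because every individual term is less than $n/3$.

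There is no real obstacle here. The only point needing care is the case split: the bound $s_i\le 2n/3$ is used only when a single element is already at least $n/3$. In every other case each step of the partial sums is shorter than the width $n/3$ of the target interval, so the partial sums cannot jump over it.
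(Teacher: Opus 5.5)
Your proof is correct and follows essentially the same route as the paper: if some $s_i\ge n/3$ the singleton works by the hypothesis $s_i\le 2n/3$, and otherwise a minimal set reaching $n/3$ has sum below $n/3+n/3=2n/3$. The only cosmetic difference is that you take a minimal prefix of the partial sums, whereas the paper takes an inclusion-minimal $I$ with $\sum_{i\in I}s_i\ge n/3$; as you note yourself, the sorting is unnecessary.
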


\begin{proof}
  For $J\subseteq[\ell]$ write $\Sigma{J}=\sum_{j\in J}s_j$.
  If there exists $j$ with $s_j\ge n/3$, then $s_j\le 2n/3$,
  so $\Sigma{\{j\}}\in[n/3,2n/3]$ and we are done.
  Otherwise, $s_i<n/3$ for all $i$.
  Choose $I$ minimal by inclusion with $\Sigma{I}\ge n/3$
  (such exists since $\Sigma{[\ell]}=n>n/3$).
  Note that $I$ is not empty,
  pick $x\in I$, and set $I'=I\sm\{x\}$.
  Then $\Sigma{I'}<n/3$, and thus
  \[
    \Sigma{I}
      = \Sigma{I'} + s_x
      < n/3 + n/3
      = 2n/3,
  \]
  and the statement follows.
\end{proof}

We conclude with the main lemma of this subsection.

\begin{lemma}[Small components]\label{lem:smallcomps}
  Suppose $\cG$ is a fully-coloured acyclic $q$-colouring of $G=(V,E)$,
  and assume $G$ is an $s$-connector.
  Let $K$ be a maximum-size edge of $\cC(\cG)$.
  Then, $|V\sm K|\le 13(s-1)$.
\end{lemma}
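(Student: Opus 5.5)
The plan is to argue by contradiction: if $m:=|V\sm K|$ is large, we exhibit disjoint $X,Y\subseteq V$ with $|X|,|Y|\ge s$ and $E_G(X,Y)=\es$, contradicting that $G$ is an $s$-connector. The key structural fact is that $\cG$ is fully-coloured, so every edge of $G$ lies inside some hyperedge of $\cC(\cG)$. Hence if we delete from the incidence forest $I=I_{\cC(\cG)}$ a set of nodes, then any two vertices of $V$ lying in different components of the remainder are non-adjacent in $G$. This holds unless the edge between them lies in a deleted hyperedge, and we also delete all vertices of every deleted hyperedge. By \cref{cor:comp:unique}, every hyperedge other than $K$ has size at most $s+1$, which is what makes such deletions cheap.

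\emph{Step 1 (remove $K$).} Delete from $I$ the node $K$ together with all vertex-nodes in $K$. The remaining forest $I'$ contains exactly the vertices of $V\sm K$, and its components $P_1,\dots,P_\ell$ partition $V\sm K$ with no $G$-edges between distinct parts. Weight each vertex-node by $1$ and each hyperedge-node by $0$. If every $|P_i|\le 2m/3$, then \cref{lem:sums} groups the parts into $X$ and $Y=(V\sm K)\sm X$ with $|X|,|Y|\ge m/3$ and $E_G(X,Y)=\es$, so $m/3<s$, giving $m\le 3(s-1)$.

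\emph{Step 2 (a heavy part).} Otherwise a single part $P=P_1$ has $|P|>2m/3$, and the other parts together have fewer than $m/3$ vertices. Apply \cref{lem:centroid} to the tree $I'[P]$ to obtain a weighted centroid $c$.
\begin{itemize}
\item If $c$ is a vertex $v$, delete $v$. Every hyperedge stays in one component of what remains, so the components of $I'[P]-v$ again have no $G$-edges between them, and each has weight at most $|P|/2$.
\item If $c$ is a hyperedge $H$, delete $H$ together with its at most $s+1$ vertices. The remaining components have no $G$-edges between them and each has weight at most $|P|/2$, while their total weight is at least $|P|-s-1$.
\end{itemize}
In either case, provided $|P|/2\le \tfrac23(|P|-s-1)$, i.e.\ $|P|$ is at least about $4(s+1)$ (otherwise $|P|$ is already $O(s)$), \cref{lem:sums} splits the remainder into two non-adjacent sides, each of size at least $(|P|-s-1)/3$. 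This forces $|P|\le 3(s-1)+s+1+O(1)$, i.e.\ $|P|\le 4s+O(1)$. Then $m<|P|+m/3$ gives $m<\tfrac32|P|$, a bound of the form $m\le c(s-1)$.

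\emph{Step 3 (degenerate cases and constants).} The case $|K|\le s+1$, where all hyperedges are small, is handled by the same centroid argument applied directly to $I$ without Step 1. The case $s=1$ needs separate care: $G$ is then complete, so all but one hyperedge-node would have to be tiny, and the additive terms like $s+1$ must be absorbed so the bound really is $0$. The main obstacle I expect is exactly this bookkeeping: ensuring the hypothesis $\max s_i\le \tfrac23\sum s_i$ of \cref{lem:sums} holds after deleting a centroid hyperedge, and converting additive $+O(1)$ terms into multiples of $s-1$. I would handle it by treating $|P|<4(s+1)$ separately and accepting a loose constant; the paper's $13(s-1)$ leaves ample room for this.
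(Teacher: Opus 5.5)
Your approach is essentially the paper's: delete $K$, weight the vertex-nodes of the incidence forest by $1$, take a weighted centroid, split with \cref{lem:sums}, and use full-colouredness plus the rank bound $s+1$ from \cref{cor:comp:unique} to get two edge-free sides. Steps~1 and~2 are sound as combinatorics. The gap is the constant, which you wave through.

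Your bookkeeping rules out $|P|\ge 4(s+1)$, so $|P|\le 4s+3$. Then $m<\tfrac32|P|$ gives only $m\le 6s+4$. This is at most $13(s-1)$ only for $s\ge 3$; at $s=2$ it gives $16>13$. So $13(s-1)$ does not leave ample room. The paper's own argument yields $m\le 6s+1$, which equals $13(s-1)$ exactly at $s=2$.

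The extra factor $\tfrac32$ comes from discarding the light parts in Step~2. The paper avoids it by applying \cref{lem:centroid} to the whole forest $I'$, so the light parts are simply further pieces in \cref{lem:sums}. If $m\ge 6s+2$, both sides then have weight at least $\lceil (m-1)/3\rceil\ge 2s+1$. Deleting the at most $s+1$ vertices of a hyperedge centroid still leaves at least $s$ on each side.

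Alternatively, you can sharpen your hyperedge case. Each component of $I'[P]-H$ contains exactly one vertex of $H$, so after deleting these every piece has weight at most $|P|/2-1$. This gives $|P|\le 4s$, hence $m\le 6s-1\le 13(s-1)$ for $s\ge 2$.

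The case $s=1$ is also not actually handled, and no absorption of additive terms can fix it. An argument that only splits $V\sm K$ can never exclude $|V\sm K|=1$, so you must use edges between $V\sm K$ and $K$. For example: $G$ is complete, so $|K|\ge 2$ when $|V|\ge 2$. For $v\notin K$ and distinct $u_1,u_2\in K$, let $H_1\supseteq\{v,u_1\}$ and $H_2\supseteq\{v,u_2\}$ be hyperedges of $\cC(\cG)$; both differ from $K$. If $H_1=H_2$, it meets $K$ in two vertices. Otherwise $K-u_1-H_1-v-H_2-u_2-K$ is a cycle in $I_{\cC(\cG)}$. Either way acyclicity fails, so $K=V$.

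Finally, your Step~3 remark about $|K|\le s+1$ is unnecessary, since Steps~1--2 never use $|K|>s+1$.
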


\begin{proof}
  Let us first consider the (easy) case of $s=1$.
  In this case, $G$ is a complete graph,
  and the fact that $\cG$ is fully-coloured and acyclic
  easily implies that $\cC(\cG)$ has the unique edge $K=V$,
  and the statement follows.

  Assume from now that $s\ge 2$.
  Write $\cC=\cC(\cG)$ and $G'=G[V']$ with $V'=V\sm K$.
  Let $\cC'$ be the restriction of $\cC$ to $V"$, i.e.~the hypergraph
  on $V'$ with edges $\{e \cap V': e \in \cC\}$.
  Note that $\cC'$ is a hyperforest.
  By \cref{cor:comp:unique} we know that $\rank(\cC')\le s+1$,
  i.e.~every edge has size at most $s+1$.

  Suppose to the contrary that $n'=|V'|\ge 13(s-1)+1 \ge 6s+2$.
  Let $T=I_{\cC'}$,
  and consider $T$ as a weighted forest with the weight function $\w$
  satisfying $\w(y)=1$ if $y\in V'$ and $\w(y)=0$ otherwise.
  By \cref{lem:centroid}, there is a weighted centroid $x$ of $T$ (w.r.t.~$\w$).

  Let $S_1,\dots,S_\ell$ be the connected components of $T-x$.
  Write $n''=\sum_{i=1}^\ell\w(S_i)$,
  and note that $n''\in\{n'-1,n'\}$.
  In particular, $n''\ge n'-1\ge 6s+1$.
  By definition of $x$, $\w(S_i)\le \w(V')/2=n'/2\le 2n''/3$.
  By \cref{lem:sums} (with $s_i=\w(S_i)$ and $n''$),
  there exists $I\subseteq[\ell]$
  such that for $X=\bigcup_{i\in I} S_i$ and $Y=\bigcup_{i\in[\ell]\sm I} S_i$
  we have $\w(X),\w(Y)\ge \ceil{n''/3}\ge 2s+1$.
  In particular, $X'=X\cap V'$ and $Y'=Y\cap V'$ have $\w(X'),\w(Y')\ge 2s+1$,
  implying that $|X'|,|Y'|\ge 2s+1$.

  Suppose first that $x\in V'$.
  By our assumption on $G$, we have $e\in E_G(X',Y')=E_{G'}(X',Y')$.
  Since $\cG$ is fully-coloured,
  there exists $K'\in\cC'$ such that $e\subseteq K'$,
  contradicting our choice of $X,Y$.

  Thus, we may assume $x\notin V'$, hence $x\in\cC'$.
  Let $X''=X'\sm x$ and $Y''=Y'\sm x$.
  As $|x| \le \rank(\cC')\le s+1$,
  we have $|X''|,|Y''|\ge s$.
  By our assumption on $G$, we have $e=\{u,v\}\in E_G(X'',Y'')=E_{G'}(X'',Y'')$
  with $u\in X''$ and $v\in Y''$.
  Since $\cG$ is fully-coloured,
  there exists $K\in\cC'$ such that $e\subseteq K$.
  This gives us a unique path $u-K-v$ in $T$.
  If $K\ne x$ then this path is a witness for $u,v$ being in the same connected component of $T-x$,
  a contradiction.
  Thus we must have $e\subseteq x$,
  but that contradicts the definition of $X'',Y''$.
\end{proof}

\subsection{Distilling} \label{subsec:distil}

This subsection presents our main algorithm, distilling,
which is a combination of all the previous algorithms:
it applies the single-colour compressions and then decycling.

\begin{algorithm}
  \caption{Distilling}\label{alg:distil}
\begin{algorithmic}
  \Procedure{Distil}{$\cG=(G_0,G_1,\dots,G_q)$}
    \State $G\gets$ underlying graph of $\cG$
    \State $C^*\gets\es$
    \For{$j\in[q]$}
      \State $G^\bullet_j\gets\Call{CD-saturate}{G_j;G}$
      \Comment \cref{alg:CD:sat}
      \State $(C^\bullet_j,A^\bullet_j,D^\bullet_j)\gets \GE(G^\bullet_j)$
      \State $C^*\gets C^*\cup C^\bullet_j$
      \State $G^\circ_j\gets\Call{C-isolate}{G^\bullet_j}$
      \Comment \cref{alg:C:isol}
    \EndFor
    \State $\cG^\circ\gets (G_0,G_1^\circ,\dots,G_q^\circ)$
    \State $\cG',T\gets\Call{Decycle}{\cG^\circ}$
    \Comment \cref{alg:decycle}
    \State \Return $\cG',T,C^*$
  \EndProcedure
\end{algorithmic}
\end{algorithm}

For $j\in[q]$,
let $\e_j$ denote the vector with a $1$ in the $j$'th coordinate and $0$'s elsewhere.
\begin{lemma}[Distilling]\label{lem:distil}
  Let $G=(V,E)$ be an $s$-connector
  and let $\cG=(G_0,G_1,\dots,G_q)$ be a fully-coloured $q$-colouring of $G$.
  Assume that $\nu(G_j)\le |V|/2-s$ for all $j\in[q]$.
  Let $\cG',T,C^*=\Call{Distil}{\cG}$,
  and write $\cG'=(G'_0,G'_1,\dots,G'_q)$.
  Then,
  there exist $\eta\in[q]$ and an integer $\kappa$
  for which the following hold:
  \begin{enumerate}
    \item\label{it:dist:kappa} $\kappa\ge (|V|-|T|-(q+13)(s-1)-1)/2$.
    \item\label{it:dist:nu} $\nu(\cG)\ge \nu(\cG')\ge \kappa\e_\eta$.
    \item\label{it:dist:mns} $\mns(\cG)\ge \mns(\cG')+ |T| \ge \kappa+|T|$.
  \end{enumerate}
\end{lemma}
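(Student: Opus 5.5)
The plan is to track the three stages of \Call{Distil}{} in turn: CD-saturation, then $C$-isolation, then decycling. At the end I apply \cref{lem:smallcomps} to the part of the graph that is still coloured, to extract one large monochromatic factor-critical component.

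\emph{Stage 1: each $C^\bullet_j$ is small.} Fix $j\in[q]$. By \cref{lem:CD:sat}, $G_j\subseteq G^\bullet_j\subseteq G$ and $\nu(G^\bullet_j)=\nu(G_j)$. Moreover either $|C^\bullet_j|<s$ or $|C^\bullet_j|>|V|-2s$. By \cref{thm:GE}, every maximum matching of $G^\bullet_j$ contains a perfect matching of $C^\bullet_j$, so $\nu(G_j)\ge |C^\bullet_j|/2$. The second alternative would therefore give $\nu(G_j)>|V|/2-s$, contradicting the hypothesis. Hence $|C^\bullet_j|\le s-1$ for every $j$, and so $|C^*|\le q(s-1)$.

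\emph{Stage 2: $C$-isolation.} By \cref{lem:C:isol}, each $G^\circ_j$ is $AD$-pure and $\nu(G^\circ_j)\le\nu(G_j)$. Every vertex of $C^\bullet_j$ is isolated in $G^\circ_j$.

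\emph{Stage 3: the coloured graph is still an $s$-connector off $C^*$.} This is the point I expect to need the most care, because the underlying graph of $\cG^\circ$ is not $G$. Let $e\in E(G)$ have both endpoints in $V\sm C^*$. Since $\cG$ is fully-coloured, $e\in E_j\subseteq E(G^\bullet_j)$ for some $j$. As $e$ avoids $C^\bullet_j$, it survives $C$-isolation. So the edges of $\cG^\circ$ inside $V\sm C^*$ are exactly $E(G[V\sm C^*])$, and $G[V\sm C^*]$ is again an $s$-connector.

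\emph{Stage 4: decycling.} I apply \cref{lem:decycle} to $\cG^\circ$, which is $AD$-pure and, after discarding $G_0$, fully coloured as a colouring of its own underlying graph. This gives $\nu(\cG')\le\nu(\cG^\circ)\le\nu(\cG)$ and $\mns(\cG')\le\mns(\cG^\circ)-|T|\le\mns(\cG)-|T|$. It also shows that $\cG'$ is $T$-proper and $D$-acyclic.

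\emph{Stage 5: restrict and locate a dominant component.} Set $V^\dagger=V\sm(T\cup C^*)$. In $\cG'$ every vertex of $T\cup C^*$ is incident to no coloured edge. Hence the restriction of $(G'_1,\dots,G'_q)$ to $V^\dagger$ is a fully-coloured colouring of $G[V^\dagger]$, again using Stage 3. It is acyclic, since its component hypergraph is obtained from the hyperforest $\cC(\cG')$ by deleting singleton edges. Also $G[V^\dagger]$ is an $s$-connector, being an induced subgraph of one.

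By \cref{lem:smallcomps}, some component $K$ of some $G'_\eta[V^\dagger]$ satisfies
\[
|K|\ge |V|-|T|-|C^*|-13(s-1)\ge |V|-|T|-(q+13)(s-1).
\]
Since $\cG'$ is $D$-pure, $K$ is a component of $G'_\eta[D'_\eta]$, which is factor-critical by \cref{thm:GE}. Therefore $\nu(G'_\eta)\ge(|K|-1)/2$. I then set $\kappa=\lceil(|V|-|T|-(q+13)(s-1)-1)/2\rceil$, which gives item~\ref{it:dist:kappa} and the bound $\nu(G'_\eta)\ge\kappa$.

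\emph{Conclusion.} This yields $\nu(\cG')\ge\kappa\e_\eta$ coordinatewise, which together with Stage 4 gives item~\ref{it:dist:nu}. It also gives $\mns(\cG')\ge\nu(G'_\eta)\ge\kappa$, which combined with Stage 4 gives item~\ref{it:dist:mns}. If $\kappa\le0$, all claims involving $\kappa$ hold trivially.
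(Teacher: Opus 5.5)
Your plan follows the paper's proof stage for stage. Stages 1--4 are correct, including your explicit checks that no edge of $G[V\sm C^*]$ is lost and that $\nu(\cG^\circ)\le\nu(\cG)$.

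The gap is in Stage 5, where you assert that in $\cG'$ every vertex of $T\cup C^*$ is incident to no coloured edge. For $T$ this is just $T$-properness. For $C^*$, however, you have only shown that a vertex of $C^\bullet_j$ is isolated in colour $j$. Edges of another colour $i\neq j$ inside $C^\bullet_j$ are not removed by $C$-isolation in colour $j$ and may well survive decycling. So nothing in your argument prevents such a vertex from lying in a non-trivial component of $G'_i$. You use the assertion twice:
\begin{itemize}
\item to get acyclicity of the restricted colouring (``delete singleton edges'');
\item to conclude that $K$ is itself a component of $G'_\eta$, hence factor-critical.
\end{itemize}
Neither inference follows from what you have shown. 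For an arbitrary vertex set $U$, a component of $G'_\eta[U]$ need not be a component of $G'_\eta$, nor factor-critical, so something specific about $V^\dagger$ is needed.

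Both points are easy to repair.

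\emph{Matching bound.} Do what the paper does: let $K'$ be the component of $G'_\eta$ containing $K$. It is factor-critical because $\cG'$ is $D$-pure, so $\nu(G'_\eta)\ge(|K'|-1)/2\ge(|K|-1)/2$. Your choice of $\kappa$ then works unchanged.

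\emph{Acyclicity.} Restriction to any vertex set preserves it. Components of $G'_j[V^\dagger]$ are pairwise disjoint subsets of components of $G'_j$, so a Berge cycle among them projects to a closed non-backtracking walk in the forest $I_{\cC(\cG')}$, which cannot exist.

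Alternatively, you can prove the weaker statement you actually need: no $G$-edge joins $C^*$ to $V^\dagger$. CD-saturation terminates with $E_G(C^\bullet_j,D^\bullet_j)=\es$, so all $G$-neighbours of $C^\bullet_j$ lie in $C^\bullet_j\cup A^\bullet_j$. Moreover $A^\bullet_j=A^\circ_j\subseteq T$ by \cref{lem:C:isol} and \cref{lem:sigma}(\ref{it:sig:Y}). Hence every component of every $G'_j$ lies entirely inside $V^\dagger$ or entirely inside $T\cup C^*$. Your deletion argument then goes through, provided you delete all hyperedges contained in $T\cup C^*$, not only singletons.
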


\begin{proof}
  Write $\cG=(G_0,G_1,\dots,G_q)$, where $G_0=\es$ as $G$ is fully coloured.
  For $j\in[q]$ let $G_j^\bullet=\Call{CD-saturate}{G_j;G}$,
  write $\GE(G_j^\bullet)=(C_j^\bullet,A_j^\bullet,D_j^\bullet)$,
  and let $G_j^\circ=\Call{C-isolate}{G_j^\bullet}$.
  By \cref{lem:CD:sat}, for each $j\in[q]$,
  either $|C_j^\bullet|>|V|-2s$,
  or $|C_j^\bullet|<s$.
  In the former case, $\nu(G_j)>|V|/2-s$, contradicting the assumptions,
  hence $|C_j^\bullet|<s$ for all $j\in[q]$, so $|C^*|\le q(s-1)$.

  Write $\cG^\circ=(G_0,G_1^\circ,\dots,G_q^\circ)$
  and $\GE(G_j^\circ)=(C_j^\circ,A_j^\circ,D_j^\circ)$.
  By \cref{lem:C:isol}, $C_j^\circ=\es$, so $G_j^\circ$ is $AD$-pure,
  hence $\cG^\circ$ is (fully-coloured and) $AD$-pure.
  In particular, the assumptions of \cref{lem:decycle} hold.
  Noting that $\cG',T=\Call{Decycle}{\cG^\circ}$,
  \cref{lem:decycle} tells us that
  $\cG'$ is $T$-proper and $D$-acyclic.

  Let $V^*=V\sm T\sm C^*$ and $\cG^*=\cG'[V^*]$.
  Write $\cG^*=(G_0^*,G_1^*,\dots,G_q^*)$.
  Let $G^*$ be the underlying graph of $\cG^*$,
  and note that $G^*=G[V^*]$, as no edges of $G$ contained in $V^*$ have been deleted.
  In particular, $G^*$ is an $s$-connector.
  Thus,
  $\cG^*$ is an $\es$-proper (that is, fully-coloured) acyclic $q$-colouring of $G^*$,
  and the assumptions of \cref{cor:comp:unique,lem:smallcomps} (with respect to $\cG^*$) hold.
  By \cref{cor:comp:unique},
  there is at most one colour in $\cG^*$
  that contains a connected component of size greater than $s+1$.
  Let $K^*\subseteq V^*$ be a maximum-size edge of $\cC(\cG^*)$,
  and let $\eta\in[q]$ be a colour such that $K^*$ is a connected component of $G_\eta^*$.
  By \cref{lem:smallcomps},
  $|V^*\sm K^*|\le 13(s-1)$.

  Let $K'$ be the unique component of $G_\eta'$ that contains $K^*$.
  By \cref{lem:decycle}(\ref{it:dec:prop}), $K'$ is factor-critical.
  In particular, it is odd-sized.
  Write $|K'|=2\kappa+1$,
  and note that $\nu(G_\eta'[K'])=\kappa$.
  We have
  \[\begin{aligned}
    2\kappa+1=|K'|\ge|K^*|
    &\ge |V^*|-13(s-1)
    \ge |V|-|T|-|C^*|-13(s-1)\\
    &\ge |V|-|T|-(13+q)(s-1),
  \end{aligned}\]
  settling (\ref{it:dist:kappa}).
  The fact that $\nu(G_\eta'[K'])=\kappa$
  implies that $\nu(\cG')\ge\kappa\e_\eta$ and $\mns(\cG')\ge\kappa$.
  This,
  together with \cref{lem:decycle}(\ref{it:dec:nu},\ref{it:dec:mns}),
  implies (\ref{it:dist:nu}),(\ref{it:dist:mns}).
\end{proof}

\subsection{Proof of the main theorem} \label{subsec:proof}

We conclude this section by proving our main theorem.
For $\vect{v}=(v_1,\dots,v_q)\in\ZZ^q$ write $(\vect{v})^+$ for the vector in $\NN^q$
in which the $j$th coordinate is $\max\{v_j,0\}$.

\begin{lemma}\label{lem:main-plus}
  Let $q,s\ge 1$ be integers.
  Suppose $\vz=(z_1,\dots,z_q)\in\NN^q$ satisfies
  $\vz_{\min}=\min_jz_j\ge s-1$
  and $\|\vz\|_1+\vz_{\min}\ge(q+13)(s-1)$.
  For all $\vt\in\NN_+^q$ and $n\ge\tmax+\Lambda_\vt+1$,
  if $G$ is an $n$-vertex $s$-connector
  then $G\to (\vt-\vz)^+K_2$.
\end{lemma}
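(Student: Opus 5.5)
The plan is a direct proof by contradiction via \cref{lem:distil}; no induction should be needed. Fix $\vt$, $n\ge\tmax+\Lambda_\vt+1$, an $n$-vertex $s$-connector $G$, and a $q$-colouring $\cG=(G_0,G_1,\dots,G_q)$ of $G$ with $G_0$ empty, so $\cG$ is fully-coloured. If $t_j\le z_j$ for some $j$, then $(t_j-z_j)^+=0$ and the empty matching suffices. So assume $t_j\ge z_j+1$ for all $j$, and suppose for contradiction that $\nu(G_j)\le t_j-z_j-1$ for every $j\in[q]$.

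The first step is to check the hypothesis $\nu(G_j)\le n/2-s$ of \cref{lem:distil}. Since every $t_i\ge1$, we have $\Lambda_\vt\ge\tmax-1$, and hence $n\ge 2\tmax$. Using $z_j\ge\vz_{\min}\ge s-1$, we get
\[
\nu(G_j)\le t_j-z_j-1\le \tmax-s\le n/2-s .
\]
We may therefore apply \cref{lem:distil}, which gives $\eta\in[q]$, an integer $\kappa$ and a set $T$ satisfying items (\ref{it:dist:kappa})--(\ref{it:dist:mns}).

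Next I combine the three conclusions with the assumed upper bounds. From (\ref{it:dist:nu}),
\[
\kappa\le\nu(G_\eta)\le t_\eta-z_\eta-1 .
\]
From (\ref{it:dist:mns}),
\[
\kappa+|T|\le\mns(\cG)\le\sum_j(t_j-z_j-1)=\Lambda_\vt-\|\vz\|_1,
\]
so $|T|\le\Lambda_\vt-\|\vz\|_1-\kappa$. Substituting this into (\ref{it:dist:kappa}), written as $2\kappa\ge n-|T|-(q+13)(s-1)-1$, gives
\[
2\kappa\ge n-\Lambda_\vt+\|\vz\|_1+\kappa-(q+13)(s-1)-1,
\]
that is,
\[
\kappa\ge n-\Lambda_\vt-1+\|\vz\|_1-(q+13)(s-1).
\]

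Finally, I use $n-\Lambda_\vt-1\ge\tmax$ together with the hypothesis $\|\vz\|_1-(q+13)(s-1)\ge-\vz_{\min}$. This yields
\[
\kappa\ge\tmax-\vz_{\min}\ge t_\eta-z_\eta,
\]
which contradicts $\kappa\le t_\eta-z_\eta-1$. Hence some colour $j$ has $\nu(G_j)\ge t_j-z_j$, as required.

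The main point of care is the verification of the hypothesis of \cref{lem:distil}, specifically the inequality $n\ge2\tmax$, which relies on $\vt\in\NN_+^q$. One should also note that multicolourings and the degenerate case $t_j\le z_j$ are harmless. The remaining steps are the linear bookkeeping above, which is exactly where the condition $\|\vz\|_1+\vz_{\min}\ge(q+13)(s-1)$ is used.
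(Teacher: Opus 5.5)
Your proof is correct and follows essentially the same route as the paper: assume every colour class is too small, apply \cref{lem:distil}, and combine its three conclusions linearly to force $\kappa\ge t_\eta-z_\eta$, contradicting $\kappa\le\nu(G_\eta)\le t_\eta-z_\eta-1$. The differences are cosmetic. You skip the paper's (unneeded) reduction to $n=\tmax+\Lambda_\vt+1$, and you derive the hypothesis $\nu(G_j)\le n/2-s$ from the contradiction assumption rather than treating its failure as a separate case.
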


We note that \cref{thm:main} is immediate from \Cref{lem:main-plus}
applied with $\vz=7(s-1)\one_q$ and $\vt'=\vt+\vz$,
noting that $\|\vz\|_1+\vz_{\min}=7(q+1)(s-1)\ge (q+13)(s-1)$.

\begin{proof}
  By passing to an induced subgraph of order $\tmax+\Lambda_\vt+1$,
  which inherits the $s$-connector property from $G$,
  we may assume that $n=\tmax+\Lambda_\vt+1$.
  Let $\cG=(G_0,G_1,\dots,G_q)$ be a fully-coloured $q$-colouring of $G$.
  If there exists $j\in[q]$ such that $\nu(G_j)>n/2-s$, we are done,
  since for every $j$, $n\ge 2t_j$,
  and this would imply $\nu(G_j)\ge t_j-s+1\ge t_j-z_j$, as required.

  We therefore assume that $\nu(G_j)\le n/2-s$ for all $j\in[q]$.
  In particular, the assumption of \cref{lem:distil} hold.
  Let $\cG',T,C^*=\Call{Distil}{\cG}$,
  and write $\cG'=(G_0',G_1',\dots,G_q')$.
  Let $\eta,\kappa$ be the constants from \cref{lem:distil},
  and recall (by \cref{lem:distil}(\ref{it:dist:kappa}))
  that \[\kappa\ge\frac{1}{2}(n-|T|-(q+13)(s-1)-1).\]
  Rearranging, we have $|T|\ge n-(q+13)(s-1)-1-2\kappa$.
  We may also assume that $\kappa\le t_\eta-z_\eta-1$,
  as otherwise, by \cref{lem:distil}(\ref{it:dist:nu}),
  $\nu(G_\eta)\ge\nu(G_\eta')\ge\kappa\ge t_\eta-z_\eta$, and we are done.
  Assume for contradiction that $\nu(G_j)\le t_j-z_j-1$ for all $j\in[q]$.
  Then, by \cref{lem:distil}(\ref{it:dist:mns}),
  \[\begin{aligned}
    \Lambda_\vt - \|\vz\|_1
    \ge
    \mns(\cG)
    &\ge \kappa+|T|
    \ge n-(q+13)(s-1)-1-\kappa\\
    &\ge \Lambda_\vt+\tmax+1-(q+13)(s-1)-t_\eta+z_\eta\\
    &\ge \Lambda_\vt+1-(q+13)(s-1)+\vz_{\min}.
  \end{aligned}\]
  Rearranging, we obtain $\|\vz\|_1+\vz_{\min}\le (q+13)(s-1)-1$,
  contradicting the assumption.
\end{proof}

\section{Discussion}\label{sec:discuss}

This concluding section discusses some applications
and potential future research directions.

\subsection{Sharpness}

We did not try to optimise the additive error of $7(q+1)(s-1)$ in \cref{thm:main}.
A slightly more careful analysis in \cref{lem:main-plus} reduces it to $\sim qs$
if $s>1$ and $q\to\infty$.
It seems plausible that the true error term should be linear in $s$ and independent of $q$.
\begin{question}\label{q:sharp}
  Is there a constant $C$ such that the following holds:
  for all integers $q,s\ge 1$,
  all $\vt\in\NN_+^q$,
  and all $n\ge\tmax+\Lambda_\vt+1+C(s-1)$,
  if $G$ is an $n$-vertex $s$-connector
  then $G\to \vt K_2$?
\end{question}

On the other hand, for some choices of $\vt$
the lower bound from \cref{prop:sharp} can be improved.
Indeed, fix $q\ge 1$ and $s\ge 2$, let $\vt=(2,\dots,2)\in\NN_+^q$,
and let $G$ be the complete split graph with a clique of size $q$ and an independent set of size $2s-1$.
Then $G$ is an $s$-connector and $|V(G)|=q+2s-1=\tmax+\Lambda_\vt+2s-3$.
However, $G\not\to\vt K_2$:
label the clique vertices by $c_1,\dots,c_q$,
and colour each edge $e$ with the minimum $j\in[q]$ such that $c_j\in e$.
Then each colour class is a star,
so every monochromatic matching has size at most $1$.
Thus any positive answer to \cref{q:sharp} would require $C\ge 2$.
We thank an anonymous colleague for drawing our attention to an error in an earlier version of this discussion.

\subsection{Transference}

Fix $q\ge 1$.
Say that a sequence of $n$-vertex graphs $(G_n)$
is \defn{$q$-asymptotically Cockayne--Lorimer}
($q$-\acl{}),
if there exists $\xi(n)=o(n)$
such that for every $\vt\in\NN_+^q$,
if $n\ge \tmax+\Lambda_\vt+\xi(n)$
then $G_n\to\vt K_2$.
Say further that $(G_n)$ is \defn{asymptotically Cockayne--Lorimer} (\acl{})
if for every fixed $C>0$ it is $q$-\acl{} for every integer $q\in[1,C]$.

\Cref{thm:main} implies that sublinear connectors are \acl{}.
It is not hard to see that almost-complete graphs
are sublinear connectors, and hence \acl{}.
This {\em defect version} of Cockayne--Lorimer
was proved in~\cite{GKM22}
and then combined with the sparse regularity lemma
to obtain the transference result that if $np\to\infty$
then the binomial random graph $G(n,p)$ is \whp{} \acl{}.
This now follows immediately from \cref{thm:main},
since $G(n,p)$ with $np\to\infty$ is \whp{} a sublinear connector.
Moreover, the same observation gives, e.g., that for $d\to\infty$,
the random regular graph $\cG_{n,d}$ is also \whp{} \acl{},
and that for $d\gg\lambda$, an $(n,d,\lambda)$-graph (i.e.~$d$-regular spectral expander) is \acl{}.

\subsection{Beating the pigeonhole bound}

A straightforward pigeonhole argument shows that
for every graph $G$ and every $\vt\in\NN_+^q$,
if $\nu(G)>\Lambda_\vt$ then $G\to\vt K_2$.
Say that $G$ is \defn{$q$-weakly Cockayne--Lorimer} ($q$-\wcl{})
if there exists $\vt\in\NN_+^q$ with $\Lambda_\vt\ge\nu(G)$
such that $G\to\vt K_2$.
Note that it is monotone in $q$,
in the sense that $q$-\wcl{} implies $q'$-\wcl{} for every $q'\ge q$.
We do not know whether this property admits a simple characterisation,
so we will just discuss a few examples.
\begin{proposition}\label{prop:bipartite:nowCL}
  If $G$ is bipartite then it is not $q$-\wcl{} for any $q\ge 1$.
\end{proposition}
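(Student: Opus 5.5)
We need to show that for bipartite $G$ and every $\vt\in\NN_+^q$ with $\Lambda_\vt\ge\nu(G)$, there is a $q$-colouring of $G$ with no $j$-coloured $t_jK_2$ for every $j$. The plan is to use K\"onig's theorem: since $G$ is bipartite, it has a vertex cover $S$ with $|S|=\nu(G)=:\nu$. We then split $S$ into $q$ groups and colour every edge according to the group of one of its endpoints in $S$.

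First, partition $S$ into sets $S_1,\dots,S_q$ with $|S_j|\le t_j-1$ for every $j$. This is possible because $\sum_j(t_j-1)=\Lambda_\vt\ge\nu=|S|$: fill the $S_j$ greedily.

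Next, colour each edge $e$ of $G$ with the smallest $j$ such that $e$ meets $S_j$. Such a $j$ exists because $S$ is a vertex cover, so every edge meets some $S_j$.

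Finally, every edge of colour $j$ meets $S_j$. Hence $S_j$ is a vertex cover of $G_j$, and any matching in colour $j$ has size at most $|S_j|\le t_j-1<t_j$. So this colouring contains no $j$-coloured $t_jK_2$ for any $j$, which gives $G\not\to\vt K_2$.

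Since this holds for every $\vt$ with $\Lambda_\vt\ge\nu(G)$, $G$ is not $q$-\wcl{}. The only substantive ingredient is K\"onig's theorem, which is where bipartiteness is used. No step is a real obstacle; the one thing to check is the degenerate case $\nu(G)=0$. There $S=\es$, all $S_j$ are empty, $G$ has no edges, and every $t_j\ge1$ is trivially avoided.
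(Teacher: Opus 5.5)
Your proof is correct and follows essentially the same route as the paper. Both take a K\H{o}nig vertex cover of size $\nu(G)$, split it into parts of size less than $t_j$, and colour each edge by a part it meets, so each part covers its colour class; the only cosmetic difference is that you use the smallest such index (a genuine colouring), whereas the paper colours every edge incident to $X_j$ with colour $j$, which its multicolouring framework permits.
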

\begin{proof}
  Write $\nu=\nu(G)$.
  Fix $q\ge 1$,
  and let $\vt=(t_1,\dots,t_q)\in\NN_+^q$ with $\Lambda_\vt\ge\nu$.
  By K\H{o}nig's theorem, there exists a vertex cover $X$ of size $\nu$.
  Partition $X=X_1\cup\dots\cup X_q$ such that $|X_j|<t_j$ for all $j\in[q]$.
  Colour every edge of $G$ which is incident to $X_j$ with colour $j$.
  Since $X_j$ is a vertex cover for the $j$-coloured graph $G_j$,
  it holds that $\nu(G_j)\le|X_j|<t_j$,
  hence $G\not\to \vt K_2$.
\end{proof}
The proof above extends to any graph that is {\em K\H{o}nig--Egerv\'ary}
(has equal matching and cover numbers) such as a {\em complete split graph},
constructed by taking a clique of size $n/2$ (for even $n$)
and joining it completely to an independent set of size $n/2$.
This graph is K\H{o}nig--Egerv\'ary and thus not \wcl{},
while being very far from being bipartite.
On the other hand:
\begin{proposition}\label{prop:oddcycle:wCL}
  The odd cycle $C_\ell$ is $2$-\wcl{} for every $\ell\ge 5$.
\end{proposition}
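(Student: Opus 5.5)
The plan is to exhibit an explicit vector $\vt=(t_1,t_2)$ and check the Ramsey property by hand. Write $\ell=2k+1$ with $k\ge 2$, so $\nu(C_\ell)=k$. I would take $\vt=(2,k)\in\NN_+^2$, which is valid since $k\ge 2$, and note that $\Lambda_\vt=(2-1)+(k-1)=k=\nu(C_\ell)$, as the definition of $2$-\wcl{} requires. It then remains to show $C_\ell\to(2K_2,kK_2)$.

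Fix any $2$-colouring $(G_1,G_2)$ of $C_\ell$; multicolourings are allowed. Suppose $G_1$ contains no matching of size $2$. Then any two edges of $G_1$ intersect, so $E(G_1)$ is a star or a triangle. The cycle $C_\ell$ with $\ell\ge 5$ contains no triangle. A star in $C_\ell$ has at most $2$ edges, since every vertex has degree $2$. Hence $E(G_1)$ consists of at most two consecutive edges of the cycle, say $\{u,v\}$ and $\{v,w\}$.

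Every edge lies in $E_1\cup E_2$, so $G_2$ contains all other edges of $C_\ell$. In the worst case $G_2\supseteq C_\ell-\{uv,vw\}$, which is the path $C_\ell-v$ on the $\ell-1=2k$ vertices other than $v$. This path has a perfect matching, so $\nu(G_2)\ge k=t_2$. If $G_1$ has fewer edges, then $G_2$ contains even more of the cycle and the same bound holds. Therefore every $2$-colouring has either a $1$-coloured $2K_2$ or a $2$-coloured $kK_2$, and $C_\ell$ is $2$-\wcl{}.

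There is no real obstacle here. The only points needing care are excluding the triangle case, which is why $\ell\ge 5$ is needed, and observing that deleting two adjacent edges from an odd cycle leaves an even path together with an isolated vertex. That even path still carries a matching of size $k$.
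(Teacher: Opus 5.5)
Your proof is correct, but it differs from the paper's in both the choice of $\vt$ and the verification. The paper takes the balanced vector $\vt=(\lfloor k/2\rfloor+1,\lceil k/2\rceil+1)$ and argues by counting edges. If neither colour class contains the required matching, then, since $t_1,t_2\le k$, neither class is the whole cycle. So both classes are linear forests, and a linear forest $F$ satisfies $|E(F)|\le 2\nu(F)$. This gives $2k+1=\ell\le 2(t_1-1)+2(t_2-1)=2k$, a contradiction.

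You instead take the lopsided vector $(2,k)$. You use the structure of $2K_2$-free graphs (a star or a triangle) to confine colour~$1$ to at most two consecutive edges. Colour~$2$ then contains a path on $2k$ vertices, which has a perfect matching.

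Your argument is more hands-on and shows exactly where $\ell\ge 5$ enters, namely in excluding the triangle. However, it is tailored to one specific vector. The paper's counting argument is more flexible. It works verbatim for every split $t_1+t_2=k+2$ with $2\le t_1,t_2\le k$, your $(2,k)$ included. It also extends immediately to $q$ colours, for any $\vt$ with $\Lambda_\vt=k$ and all $t_i\le k$.
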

\begin{proof}  Let $q=2$ and write $\ell=2k+1$ (so $k\ge 2$).
  Let $r=\floor{k/2}+1$ and $b=\ceil{k/2}+1$, and note that $r,b\le k$.
  Take $\vt=(r,b)$, so $\Lambda_\vt=k=\nu(C_\ell)$.
  Consider a $2$-colouring of $E(C_\ell)$,
  and let $R,B\subseteq C_\ell$ denote the corresponding colour subgraphs.
  Assume for contradiction that $\nu(R)<r\le k$ and $\nu(B)<b\le k$.
  In particular, $R,B\ne C_\ell$.
  This implies that $R,B$ are linear forests,
  hence $|E(R)|\le 2\nu(R) \le 2(r-1)$
  and $|E(B)|\le 2\nu(B) \le 2(b-1)$.
  But this implies that $2k+1=\ell=|E(R)|+|E(B)|\le 2(r-1)+2(b-1)=2k$,
  a contradiction.
\end{proof}

The triangle, however, is not $q$-\wcl{} for any $q\ge 1$.
Note that odd cycles only just beat the pigeonhole bound,
so we next ask when the win is more significant, via the parameter
\[
  \rho_q(G) :=
  \sup\left\{
    \frac{\Lambda_\vt+1}{\nu(G)}:\
    \vt\in\NN_+^q,\ G\to\vt K_2
  \right\}.
\]
Thus $\rho_q(G)\ge 1$ for every graph,
and being $q$-\wcl{} is equivalent to having $\rho_q(G)> 1$.
We are interested in graph sequences $(G_n)$
for which there exists $\eps>0$
with $\rho_q(G_n)\ge 1+\eps$ for all sufficiently large $n$.
Formally, say that $(G_n)$ is \defn{$q$-\wcl{}}
if
\[
  \liminf_n \rho_q(G_n)>1.
\]

Note that \cref{thm:main} implies that
if $G$ is an $n$-vertex $o(n)$-connector
then,
by choosing $\vt$ as balanced as possible,
\[
  \rho_q(G)\ge (1-o(1))\frac{2q}{q+1}.
\]

In particular, as $G(n,d/n)$ and $\cG_{n,d}$ are both, \whp{},
$\beta n$-connectors for $\beta=\beta(d)$ which tends to $0$ as $d\to\infty$,
the theorem implies that, for sufficiently large $d$,
they are $2$-\wcl{} \whp{}.
It does not seem to be obvious, however,
how low can $d$ be (as a function of $q$)
to guarantee that.

\begin{question}\label{q:gnd}
  Is the random cubic graph $\cG_{n,3}$ \whp{} $2$-\wcl{}?
\end{question}

\begin{question}\label{q:gnp}
  What is the minimum $d=d(q)$ for which
  $G(n,d/n)$ is \whp{} $q$-\wcl{}?
\end{question}

Note that in \cref{q:gnp}, the following proposition shows that $d$ must be greater than $1$.
\begin{proposition}\label{prop:gnp}
  For every integer $q\ge 1$
  there exists $c>1$
  such that if $d<c$ and $G\sim G(n,d/n)$
  then $G$ is \whp{} not $q$-\wcl{}.
\end{proposition}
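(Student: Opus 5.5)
The plan is to generalise the K\H{o}nig argument of \cref{prop:bipartite:nowCL}. We split $E(G)$ into pieces whose matching numbers add up to exactly $\nu(G)$, with almost every piece contributing at most $1$. We then distribute the pieces greedily among the colours so that colour $j$ receives total matching number at most $t_j-1$. The only piece that is not fine-grained is the (small) giant component, and it goes into the colour with the largest $t_j$.

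First the structural facts about $G\sim G(n,d/n)$, $d<c$, holding \whp{}. These are standard random graph facts that I would cite rather than reprove.
\begin{enumerate}
\item The number $m$ of isolated edges satisfies $m\ge \alpha n$, where $\alpha=e^{-2c}/4$ is a constant for $d\in[1/2,c]$. (If $d<1/2$ we just take $c=1$ and note there is no giant; small $d$ is handled the same way with constants depending on $d$. To keep the proposal uniform one can restrict attention to $d$ near $1$ and treat smaller $d$ by the subcritical bound.)
\item Let $U$ be the union of the giant component (if $d>1$) and all components containing a cycle. Then $|U|\le \eps(c)n+O(\log n)$, where $\eps(c)\to0$ as $c\downarrow1$. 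This holds because the giant has order $(y(d)+o(1))n$ with $y(1^+)=0$, and the non-giant components contain only $O_p(1)$ cycles, each in a component of size $O(\log n)$.
\item Every other component is a tree.
\end{enumerate}
We choose $c>1$ so small that $\eps(c)<\alpha/q$.

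Now fix $\vt\in\NN_+^q$ with $\Lambda_\vt\ge\nu(G)$; we must build a colouring with $\nu(G_j)\le t_j-1$ for all $j$. Since matching numbers are additive over components,
\[\nu(G)=m+\nu(G[U])+\nu(F),\]
where $F$ is the forest of tree components of order at least $3$. By pigeonhole, $\tmax-1\ge\Lambda_\vt/q\ge\nu(G)/q\ge m/q\ge\alpha n/q$. Let $j^*$ be a colour with $t_{j^*}=\tmax$.

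The colouring is built in three steps.
\begin{enumerate}
\item Colour all of $G[U]$ with colour $j^*$. This uses $\nu(G[U])\le|U|/2<\alpha n/q\le t_{j^*}-1$ of its capacity \whp{}.
\item By K\H{o}nig, $F$ has a vertex cover $X$ with $|X|=\nu(F)$. Each star of $F$ at a vertex $x\in X$ (edges of $F$ at $x$ not already taken by an earlier cover vertex) raises the matching number of the colour it receives by at most $1$. Likewise each isolated edge raises it by exactly $1$.
\item Distribute these $|X|+m$ unit pieces greedily, filling colour $j$ up to $t_j-1$. The total demand on the $\nu$'s is $\nu(G[U])+|X|+m=\nu(G)\le\Lambda_\vt=\sum_j(t_j-1)$, so the greedy filling never overflows. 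Hence every colour satisfies $\nu(G_j)\le t_j-1$, so $G\not\to\vt K_2$.
\end{enumerate}
This uses subadditivity of $\nu$ over the pieces placed in one colour.

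The main obstacle is the first step. We must ensure the one non-K\H{o}nig--Egerv\'ary chunk, namely the giant together with the odd unicyclic components, fits entirely inside a single colour. This is exactly where $c$ close to $1$ is needed: it makes the giant's matching number an arbitrarily small fraction of $n$. Meanwhile the linearly many isolated edges force $\tmax$ to be linear in $n$. I would verify carefully the random graph inputs (giant size $\to0$ as $d\downarrow1$; $O(\log n)$-size unicyclic components; linear count of isolated edges, via second moment), since the combinatorial part is then routine.
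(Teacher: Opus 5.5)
Your proof is correct and is essentially the paper's argument: put the giant (largest) component and all cyclic components into a colour with $t_j=\tmax$, which is linear in $n$ because $\nu(G)$ is, and then colour the remaining forest within the leftover budget by the K\H{o}nig-cover argument of \cref{prop:bipartite:nowCL}. You inline that argument as a greedy assignment of stars and isolated edges, whereas the paper cites $\nu(G)\ge\gamma n$ and applies the proposition to the reduced vector $\vt'$. One harmless slip: at $d=1$ the cyclic components can have order about $n^{2/3}$ rather than $O(\log n)$, but you only need $|U|=o(n)$ \whp{} for each fixed $d<c$, and that still holds.
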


\begin{proof}[Proof sketch]
  Choose $c>1$ later.
  Let $0<d<c$ and $G\sim G(n,d/n)$.
  Pick $\gamma>0$ be such that $\nu(G)\ge\gamma n$ \whp{}.
  Let $L$ be the largest component of $G$
  and denote by $H$ the union of all other components which are cyclic.
  We can choose $c>1$ sufficiently small
  so that $|E(L\cup H)|\le \frac{1}{2q}\nu(G)$ \whp{}.
  Condition on this event.
  Let $\vt=(t_1,\dots,t_q)$ with $\Lambda_\vt\ge\nu(G)$,
  and let $j\in[q]$ be such that $t_j=\tmax$.
  Observe that $\tmax\ge\floor{\nu(G)/q}+1$.
  For sufficiently large $n$, this implies $\nu_1:=\nu(G[L\cup H])<\tmax=t_j$.
  Colour every edge of $L\cup H$ with colour $j$,
  and let $F=G[[n]\sm (L\cup H)]$.
  Note that $F$ is a forest and, in particular, bipartite.
  Define $\vt'=(t_1',\dots,t_q')$
  by letting $t_i'=t_i$ if $i\ne j$ and $t_j'=t_j-\nu_1\ge 1$.
  In particular, $\Lambda_{\vt'}=\Lambda_\vt-\nu_1\ge \nu(G)-\nu_1=\nu(F)$.
  So, by \cref{prop:bipartite:nowCL}, $F$ is not $q$-\wcl.
  Hence, there is a $q$-colouring of $E(F)$ for which
  there is no $i$-coloured $t_i'K_2$.
  Combined with the colouring of $L\cup H$,
  this gives a $q$-colouring of $E(G)$ which is $\vt K_2$-free.
\end{proof}

\subsection{Hypergraphs}

The hypergraph analogue of the Cockayne--Lorimer theorem,
due to Alon, Frankl and Lov\'asz~\cite{AFL86},
shows that if $n\ge (r-1)\tmax+\Lambda_\vt+1$
then $K_n^r\to\vt K_r^r$, which is sharp.
Here $K_n^r$ denotes the complete $r$-uniform $n$-vertex hypergraph
(so $K_r^r$ is a single $r$-edge).
Defect and transference versions of this result have been obtained in~\cite{GGMS25+}.
Since for $r=2$ our main result reproves the defect and transference versions,
it is natural to ask whether an analogous extension holds for $r>2$.

We propose the following generalisation of the notion of an $s$-connector.
An $r$-uniform hypergraph is an \defn{$s$-connector}
if for every collection of pairwise disjoint vertex sets $A_1,\dots,A_r$
with $|A_i|\ge s$ for all $i\in[r]$,
there exists an edge $e=\{v_1,\dots,v_r\}$ of $H$
such that $v_i\in A_i$ for all $i\in[r]$.
\begin{conjecture}\label{conj:hyper}
  Let $q,s\ge 1$ and $r\ge 2$ be integers.
  There exists $\phi=\phi_{q,r}:(0,1]\to(0,\infty)$
  with $\phi(x)\to 0$ as $x\to 0$
  such that the following holds:
  For all $\vt\in\NN_+^q$ and $n\ge(r-1)\tmax+\Lambda_\vt+1+n\phi(s/n)$,
  if $H$ is an $r$-uniform $n$-vertex $s$-connector
  then $H\to \vt K_r^r$.
\end{conjecture}

If \cref{conj:hyper} is true it would be a significant strengthening
of the defect and transference versions of the Alon--Frankl--Lov\'asz Theorem.

\bibliography{library}

\end{document}